\documentclass[10pt]{amsart}
\usepackage{color}
\usepackage{amssymb}
\usepackage{epsfig}
\usepackage{url}
\usepackage{setspace}
\usepackage{algorithm}
\usepackage{algorithmicx}
\usepackage{algpseudocode}
\usepackage{pdflscape}

\theoremstyle{plain}

\newtheorem{thm}{Theorem}[section]
\newtheorem{cor}[thm]{Corollary}
\newtheorem{lem}[thm]{Lemma}

\newtheorem{rem}[thm]{Remark}
\newtheorem{ques}[thm]{Question}
\newtheorem{conj}[thm]{Conjecture}
\newtheorem{exam}[thm]{Example}
\newtheorem{prob}[thm]{Problem}
\def\cal{\mathcal}
\def\bbb{\mathbb}
\def\op{\operatorname}
\renewcommand{\phi}{\varphi}
\newcommand{\R}{\bbb{R}}
\newcommand{\N}{\bbb{N}}
\newcommand{\Z}{\bbb{Z}}
\newcommand{\Q}{\bbb{Q}}

\begin{document}
\title[Hilbert cubes of dimension 3 in the set of squares]{There are infinitely many Hilbert cubes of dimension 3 in the set of squares}

\author{Andrew Bremner, Christian Elsholtz and Maciej Ulas}

\keywords{Hilbert cube, sumsets in the squares, rational curve, elliptic curve, parameterization, Bombieri-Lang conjecture}
\subjclass[2000]{11D41, 11G05, 11B75, 14H52}
\thanks{
Research of C.E. was supported by a joint FWF-ANR project ArithRand (I 4945-N and ANR-20-CE91-0006).\\
Research of M.U. was supported by a grant of the National Science Centre (NCN), Poland, no. UMO-2019/34/E/ST1/00094}


\begin{abstract}
A Hilbert cube of dimension $d$ is the set of integers 
\[
H(a_{0}; a_{1}, \ldots, a_{d})=a_{0}+\{0, a_{1}\}+\cdots+\{0, a_{d}\}=\left\{a_{0}+\sum_{i=1}^{d}\varepsilon_{i}a_{i}:\;\varepsilon_{i}\in\{0,1\}\right\}.
\]
Brown, Erd\H{o}s and Freedman asked whether the maximal dimension of a Hilbert cube in the set $\cal{S}=\{n^2:\;n\in\mathbb{N}\}$ of integer squares is absolutely bounded or not. Dietmann and Elsholtz proved that if $H(a_{0}; a_{1}, \ldots, a_{d})\subset \cal{S}\cap [0, N]$, then $d\leq 7 \log\log N$ for all sufficiently large values of $N$. Here we prove that there exist at least $\gg N^{1/8}$ Hilbert cubes 
$H(a_{0}; a_{1}, a_{2}, a_{3})$ with $a_{0}, a_{1}, a_{2}, a_{3}\in [0,N]$ in the set of squares.
Moreover, we prove that for each $i, j\in\{0, 1, 2, 3\}$ with $i<j$, the set
$$
\left\{\frac{a_{i}}{a_{j}}:\;H(a_{0}; a_{1}, a_{2}, a_{3})\subset S\right\}
$$
is dense in the set of positive real numbers (in the Euclidean topology).
\end{abstract}
\maketitle
\section{Introduction}\label{sec1}

A Hilbert cube 
\[H(a_0;a_1, \ldots, a_d)=a_0+ \{0,a_1\}+ \cdots + \{0,a_d\}
=\left\{a_{0}+\sum_{i=1}^{d}\varepsilon_{i}a_{i}:\;\varepsilon_{i}\in\{0,1\}\right\}
\]
is an iterated sumset. These cubes were introduced by Hilbert \cite{Hilbert:1892} in connection with an irreducibility question on polynomials.
These cubes occur quite naturally for example in Szemer\'{e}di's proof of
Szemer\'{e}di's theorem \cite{Szemeredi:1969}, for arithmetic progressions of length $4$ and in Gowers's norm.
They are also a well studied object in combinatorics, see e.g.~\cite{Conlon-Fox-Sudakov:2014, Hegyvari:1999, Solymosi:2007}.

For a given set $S$ of positive integers it is natural to ask if infinite Hilbert cubes can exist in $S$,
and, if not, about the maximal dimension $d$ of such a cube in $S$. It is easy to see that an infinite Hilbert cube does not exist in the squares, as for $i \geq 1$ each $a_i$ would be the difference between squares infinitely often, 
(say $a_i=(a_0+a_i+a_j)-(a_0+a_j)$). But as
differences between consecutive squares are increasing, a fixed difference can be the difference between squares only finitely often.

Let us assume that all $a_1, \ldots , a_d$ are non-zero.
Brown, Erd\H{o}s and Freedman \cite{BrownandErdosandFreedman:1990}
asked whether the maximal dimension of a Hilbert cube in the set $\cal{S}=\{n^2:\;n\in\mathbb{N}\}$ of integer squares is \emph{absolutely bounded} or not. 
Hegyvari and S\'{a}rkozy \cite{HegyvariandSarkozy:1999}
 proved for the set of squares in a finite interval $[1,N]$
that the maximal value of $d$ is bounded by $d=O((\log N)^{1/3})$.
This was improved by Dietmann and Elsholtz \cite{Dietmann-Elsholtz:2012, Dietmann-Elsholtz:2015} first to $d=O((\log \log N)^2)$, and later to $O(\log \log N)$.

Cilleruelo and Granville
\cite{CillerueloandGranville:2007} observed that
the Bombieri-Lang conjecture implies that $d$ is absolutely bounded.
In fact, $x^2+a_1, x^2+a_2$ and $x^2+a_1+a_2$ must be squares for the many distinct values of $x=a_0 + \{0,a_3\} + \cdots + \{0,a_d\}$, while the Bombieri-Lang conjecture implies that the curve
$y^2=(x^2+a_1)(x^2+a_2)(x^2+a_1+a_2)$ only has an absolutely bounded number of integral solutions $(x,y)$,
(for details see \cite{CaporasoHarrisandMazur, ShkredovandSolymosi}).

Some related problems are as follows:
A well known open problem by Rudin asks how many squares 
can be in an arithmetic progression $an+b, 1\leq n \leq N$. Rudin conjectured that the progression $24n+1$ has the largest number of squares. For large modulus this is a very difficult question.
Bombieri, Granville and Pintz \cite{BombieriGranvilleandPintz},  and later Bombieri and Zannier \cite{BombieriandZannier} proved that the  maximal number of squares in an arithmetic progression of length $N$ is bounded by $O(N^{2/3}(\log N)^A))$ and $O((\log N)^{3/5}(\log N)^{A'})$, respectively, where $A,A'$ are constants.

One may ask about sumsets $A+B$ in the set of squares. A Hilbert cube of high dimension 
can be naturally decomposed into two such summands, in many distinct ways.
Erd\H{o}s and Moser asked whether
there are arbitrarily large sets
such that $a_i+a_j$ are always squares, where $a_i, a_j \in A$ are distinct. 
Rivat, S\'{a}rk\'{o}zy and Stewart \cite{RivatSarkozyandStewart} and 
Gyarmati \cite{Gyarmati:2001} studied the size of sumsets in the squares $\cal{A}+\cal{B}\subset \cal{S}\cap [1,N]$.
For example, the following bound holds (see \cite{Gyarmati:2001}):  
$\min (|\cal{A}|, \cal{B}|)\leq 8 \log N$.
Related results are by the second author with Wurzinger \cite{Elsholtz-Wurzinger}, for example:
If $|\cal A|\geq \log \log N$, then $|\cal B| =O(\log N \log \log N)$.
The second author with Ruzsa and Wurzinger \cite{Elsholtz-Ruzsa-Wurzinger} proved:
If $\cal{A}+\cal{B}+\cal{C}\subset \cal{S}\cap [1,N]$, then
$\min (|\cal{A}|, \cal{B}|, |\cal{C}|)=O(( \log N)^{4/5})$.

When $|A|=3$ the situation can be well described by elliptic curves, see \cite{DujellaandElsholtz:2013}.
In fact, for every sufficiently large $N$ there exist sets $A$ such that there exists a set $B$ of size 
$\gg (\log N)^{15/17}$ such that all elements of $A+B\subset [1,N]$ are in the squares. Further, it is also possible to have all elements $A,B,A+B$ being squares, with the estimate
$\gg (\log N)^{9/11}$.

When $|A|=4$ or larger,
then conditional on the Bombieri-Lang conjecture (or the uniformity conjecture) there is an {\emph{absolute bound}} on $|B|$.
(See section 4.3. of \cite{AlonandAngelandBenjaminiandLubetzky:2012}.)

For comparison it is known that the set of squares does not contain any arithmetic progression of length $4$.

As it turns out, one can find some small Hilbert cubes by hand, and many more by a computer.
Two  easy examples are
\[1+\{0,15\}+\{0,48\}=\{1^2,4^2,7^2,8^2\}\]
and
\[1 + \{0, 528\} + \{0, 840\}+ \{0, 840\} = \{1^2, 23^2, 29^2, 29^2, 37^2, 37^2, 41^2, 47^2\}.\]
Note that in the last example $29^2$ and $37^2$ occur in two different ways as the set of squares contains \emph{two} arithmetic progressions of length $3$ with gap 840.
According to our computations the smallest Hilbert cube of dimension 3 with all entries positive and distinct is 
$$
10^2+\{0, 2400\}+\{0, 4389\}+\{0, 8736\}=\{10^2, 50^2, 67^2, 83^2, 94^2, 106^2, 115^2, 125^2\}.
$$
Here, by smallest, we mean the Hilbert cube $H(a_{0}; a_{1}, a_{2}, a_{3})$ with the smallest possible sum $a_{0}+a_{1}+a_{2}+a_{3}$.

We will concentrate on the case that the $a_i$ are positive.
Otherwise one could have closely related Hilbert cube representations such as
\begin{alignat*}{3}
2209 + \{0, -528\} + \{0, -840\}+ \{0, -840\} &=&
841 + \{0, 528\} + \{0, -840\}+ \{0, 840\}\\ 
&=&
529 + \{0, -528\} + \{0, 840\}+ \{0, 840\}\\
&=&\{1^2, 23^2, 29^2, 29^2, 37^2, 37^2, 41^2, 47^2\}.
\end{alignat*}

It might seem that a $d$-dimensional Hilbert cube with many overlapping expressions, for example $a_0+a_1+a_2+a_3=a_0+a_4$ might be easier to find as there are less square-conditions. A natural question is: how many distinct elements must a $d$-dimensional Hilbert cube in the squares necessarily have?
Dietmann and the second author \cite[Lemma 1.4]{Dietmann-Elsholtz:2015} gave an exponential lower bound:
$|\cal{H}(a_0;a_1, \ldots, a_d)|\geq \lceil 2 (\frac{4}{3})^{d-1}\rceil$.
That bound was intended for the asymptotic exponential growth.
For small $d$ it can be adapted as follows:
\begin{lem}
Let $k \geq 3$ be a positive integer, and let $\cal{S}$ denote a set of integers without an arithmetic progression of length $k$. 
Moreover, let $H(a_0; a_1, \ldots , a_d) \subset \cal{S}$. Let $d=d_1+d_2$. Then
\[|H| \geq \left\lceil C_{d_1}  (\frac{k}{k-1})^{d_2}\right\rceil, \]
where $C_{d_1}$ is the minimal size of a Hilbert cube of dimension $d_1$ in ${\cal S}$.
\end{lem}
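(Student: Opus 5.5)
Fix the sub-cube $H_1 = H(a_0; a_1,\ldots,a_{d_1})\subset \cal{S}$, which is a Hilbert cube of dimension $d_1$ in $\cal{S}$; by definition of $C_{d_1}$ it has at least $C_{d_1}$ distinct elements. The plan is to adjoin the remaining generators $a_{d_1+1},\ldots,a_d$ one at a time and show that each step multiplies the number of distinct elements by at least $k/(k-1)$. Write $H^{(j)} = H(a_0;a_1,\ldots,a_{d_1+j})$, so $H^{(0)}=H_1$, $H^{(d_2)}=H$, and $H^{(j+1)} = H^{(j)} \cup (H^{(j)}+a)$ with $a=a_{d_1+j+1}$. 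Every $H^{(j)}$ lies in $H\subset\cal{S}$.

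The key step: let $X$ be a finite set of integers contained in a $k$-AP-free set $\cal{S}$, and $a\neq 0$ with $X\cup(X+a)\subset\cal{S}$. Then $|X\cup(X+a)|\ge \frac{k}{k-1}|X|$. To prove it, partition $X$ into maximal chains $x, x+a, \ldots, x+(m-1)a$, i.e. residue-class runs with step $a$. Each such run is an arithmetic progression of length $m$ contained in $\cal{S}$, and so is the extended run $x,\ldots,x+ma$ in $X\cup(X+a)$, which has length $m+1$. Since $\cal{S}$ contains no $k$-term progression, $m+1\le k-1$, that is, $m\le k-2$. The run contributes $m$ elements to $X$ and $m+1$ elements to $X\cup(X+a)$, and distinct maximal runs give disjoint extended runs. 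Hence
\[
|X\cup(X+a)| = \sum (m_i+1) \ge \sum m_i\cdot \frac{k-1}{k-2} \ge \frac{k}{k-1}|X|,
\]
using $(m+1)/m\ge (k-1)/(k-2)\ge k/(k-1)$ for $m\le k-2$. One could even get the slightly stronger factor $(k-1)/(k-2)$, but the weaker one suffices. If $a$ may be negative, replace it by $|a|$ by symmetry, so the same argument applies. This assumes all $a_i\neq 0$, as the paper does.

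Iterating the key step $d_2$ times gives $|H|\ge C_{d_1}(k/(k-1))^{d_2}$. Since $|H|$ is an integer, we may take the ceiling. The main point needing care is the disjointness and maximality bookkeeping in the chain decomposition: the extended run of one chain cannot merge with another chain, because of maximality. Otherwise the argument is routine.
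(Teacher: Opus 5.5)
The paper gives no proof of this lemma; it is stated as an adaptation of Lemma~1.4 of Dietmann--Elsholtz. Your argument is correct and self-contained, and it yields a stronger bound than the one stated.

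The steps all hold:
\begin{itemize}
\item The maximal runs with step $a$ partition $X=H^{(j)}$.
\item Their extensions $x,\dots,x+ma$ cover $X\cup(X+a)$ and are pairwise disjoint, by maximality.
\item Each extension is an $(m+1)$-term progression inside $H^{(j+1)}\subset H\subset\mathcal{S}$, so $m\le k-2$.
\item Iterating from $|H^{(0)}|\ge C_{d_1}$ gives the claim.
\end{itemize}
You also rightly note that all $a_i\neq 0$ is needed. The remark about negative $a$ is unnecessary, since the run argument never uses the sign of $a$. As written, ``by symmetry'' would really mean translating $X$ by $a$, because $\mathcal{S}$ itself is not symmetric.

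The factor $k/(k-1)$ in the statement is what the same run decomposition gives if you use only that $X$ itself is $k$-AP-free. Then runs have length at most $k-1$, each contributing one new element. This matches the quoted Dietmann--Elsholtz bound $\lceil 2(4/3)^{d-1}\rceil$.

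Your observation that the extended run also lies in $\mathcal{S}$ improves the factor to $(k-1)/(k-2)$. For $k=3$ this recovers exactly the fact $|H|=2^d$ mentioned right after the lemma, which the stated bound does not give. For the squares ($k=4$) it gives $|H|\ge\lceil C_{d_1}(3/2)^{d_2}\rceil$, and in general $2(3/2)^{d-1}$ in place of $2(4/3)^{d-1}$. For example, when $d=3$ it yields $|H|\ge 5$ instead of $4$, closer to the observed minimum $6$.
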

It is well known that for sets without progressions of length $k= 3$ one even has that $|H| = 2^d$. For the set of squares we have $k=4$.

Let $d=1$, then the estimate gives the correct value $C_1=2$. When $d=2$ this gives the correct value $C_2=\lceil\frac{8}{3}\rceil=3$. 
When $d=3$ the lower bound gives $4$, while $6$ is the actual minimum. 


Here are several quite natural open problems.
\begin{ques}\label{firstques}
 \begin{enumerate}
\item
Does a Hilbert cube with $d=4$ exist in the set of squares? Do infinitely many such cubes exist? 
\item Does a Hilbert cube
with $a_0=0$ and $d=3$ exist?
(This is the open problem of a perfect Euler brick (or cuboid).)
\item
 Can the value of $d$ increase (without any upper bound)?
\item
If the last question has a negative answer:
What is the largest value of $d$ which occurs at least once,
and what is the largest value of $d$ which occurs for infinitely many Hilbert cubes in $\cal{S}$?
\item
Similarly, what is the largest possible value of $d$, when all $a_i$ are distinct?
\end{enumerate}
\end{ques}

For all we know no Hilbert cube has been found in the set of squares of dimension $d \geq 4$.
But even for dimensions $d=2$ and $d=3$ there is no systematic study.
For dimension $d=2$ it is not difficult to write down solutions:
A Hilbert cube $H(a_0;a_1,a_2)=\{a_0, a_0+a_1, a_0+a_2, a_0+a_1+a_2\}$ in the set of squares can be described by
$a_0=x^2, a_1=y^2-x^2, a_2=z^2-x^2$, if and only if
$a_0+a_1+a_2=y^2+z^2-x^2$ is a square.
In other words, every integer with at least two distinct representations as sums of two squares, $y^2+z^2=t^2+x^2$, generates a Hilbert cube of dimension $2$ in the squares, and every Hilbert 
cube of dimension $2$ corresponds to a solution of 
$y^2+z^2=t^2+x^2$.
Moreover, if $a_1=a_2$, then $y=z$ and this corresponds to $2y^2=x^2+t^2$, i.e.~three distinct squares in an arithmetic progressions. (Note that the case $x=t$ would yield $x=y=t$, and so $a_1=x^2, a_1=y^2-x^2=0, a_2=z^2-x^2=0$).

A Hilbert cube is called reduced if $\gcd(a_0,a_1,a_2,a_3)=1$.
In this manuscript we show that infinitely many reduced Hilbert cubes
of dimension $d=3$ exist, giving an explicit parametric construction. This leads to a good quantitative lower bound for the number of such cubes in $[0,N]$.


Looking at the table of examples many further questions arise with regard to the values $a_i$ or patterns:
\begin{ques}\label{genques}
\begin{enumerate}
    \item 
    Can there be infinitely many Hilbert cubes with a fixed value of $a_0$? Otherwise, can one give an effective upper bound on the number and the size of elements $a_1,a_2,a_3$ in terms of $a_0$?
     \item Are there infinitely many reduced Hilbert cubes of dimension $d \geq 3$ where
     $a_i=a_{i+1}$ holds? (The cube will then contain an arithmetic progression of length 3, and many shifted copies.)
\item Does a Hilbert cube of dimension $4$ with $a_1=a_2>0$ and $ a_3=a_4>0$ exist? Note that if so, then we may construct a $3 \times 3$ magic square of squares, with common sum $3(a_0+a_1+a_3)$ of rows, columns and diagonals:
\begin{center}
        $\Biggl($
\begin{tabular}{ccc}
        $a_0+2 a_1+a_3$ & $a_0$ & $a_0+a_1+2 a_3$ \\
        $a_0+2 a_3$ & $a_0+a_1+a_3$ & $a_0+2 a_1$ \\
        $a_0+a_1$ & $a_0+2 a_1+2 a_3$ & $a_0+a_3$
\end{tabular}
        $\Biggr)$
\end{center}
This special case is therefore a well-known open question: see \cite{Robertson}, \cite{Bremner:1999}, \cite{Bremner:2001}. The magic square (see \cite{Bremner:1999})
\begin{center}
        $\Biggl($
\begin{tabular}{ccc}
        $565^2$ & $23^2$ & $222121$ \\
        $289^2$ & $425^2$ & $527^2$ \\
        $373^2$ & $360721$ & $205^2$
\end{tabular}
        $\Biggr)$
\end{center}
leads to $(a_0,a_1,a_2,a_3,a_4)=(23^2, \; 138600, \; 138600, \; 41496, \; 41496)$ with thirteen of the sixteen required sums for a Hilbert cube of dimension $4$ being square.
     \item
     Given $a_0$, or $a_0,a_1$ or $a_0,a_1,a_2$ what are good search bounds 
     on other elements $a_i$ and how can one algorithmically find cube extensions?
     \end{enumerate}
     \end{ques}

We address some of these questions in this paper and give the following 
results.

\begin{thm}{\label{thm:finite}}
\begin{enumerate}
\item  For any fixed square $a_0$ there are infinitely many Hilbert cubes in the set of squares with this $a_0$ and $d=2$.
\item For fixed $a_1$ there are only finitely many Hilbert cubes of 
dimension $d \geq 2$ in the set of squares. The bound is effective:
\begin{enumerate}
\item
For each $a_i$ the upper bound $a_i\leq \frac{1}{4}(a_1-1)^2-a_0$ holds.
\item
For each $a_i$ there are at most $\tau(a_i)$ many choices, where $\tau(n)$ is the number of positive divisors of $n$.
\end{enumerate}
\end{enumerate}
\end{thm}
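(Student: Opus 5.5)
Part (1) uses the dimension-2 description given just before the statement: $H(a_0;a_1,a_2)$ lies in the squares exactly when $a_0=x^2$, $a_1=y^2-x^2$, $a_2=z^2-x^2$ and $y^2+z^2-x^2=t^2$. So for fixed $x$ I need infinitely many solutions of $y^2+z^2=x^2+t^2$ with $y,z>x$ (so that $a_1,a_2>0$). I would take $z=y+1$ and $t=y+x+m$ for a small fixed integer $m$, and then solve the resulting equation, which is linear in $y$. A cleaner option is a one-parameter family from the Brahmagupta identity $(p^2+q^2)(r^2+s^2)=(pr-qs)^2+(ps+qr)^2=(pr+qs)^2+(ps-qr)^2$: fix one representation to have $x$ as a component and let the other parameter grow.

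Concretely, I would try $y=n$, $z=n+k$, $t=n+j$ and solve
\[
n^2+(n+k)^2-x^2=(n+j)^2 .
\]
This gives $n^2+2(k-j)n+k^2-j^2-x^2=0$. Taking $k=j$ leaves $n^2=x^2$, which is not useful. Instead, for fixed $x$, I would use the factorization $(t-y)(t+y)=z^2-x^2=(z-x)(z+x)$. Choose $z$ arbitrarily large with $z>x$, and set $t-y=1$ or $2$ according to parity, so $t+y=(z^2-x^2)/(t-y)$. For $z\equiv x \pmod 2$ the product $z^2-x^2$ is divisible by $4$, so $t-y=2$ gives integers $y=(z^2-x^2)/4-1$ and $t=y+2$. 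For $z$ large, $y>x$. Hence $a_1,a_2>0$, and we get infinitely many cubes. This settles (1) with little effort; $a_0=0$ just means $x=0$.

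For part (2), fix $a_1>0$. In a cube of dimension $d\ge2$, for every $i\ge2$ both $a_0+a_i=u^2$ and $a_0+a_1+a_i=v^2$ are squares, so $a_1=v^2-u^2=(v-u)(v+u)$ with $v-u\ge1$. This is the key step. Since $v+u\equiv v-u \pmod 2$ and $v-u\ge1$, we have $v+u\le a_1$. If $v-u=1$ then $a_1=2u+1$ is odd, and in general $u=\bigl(a_1/(v-u)-(v-u)\bigr)/2\le (a_1-1)/2$. Therefore $a_0+a_i=u^2\le\frac14(a_1-1)^2$, which is bound (a) for $i\ge2$, and (a) for $a_0$ itself follows from the same inequality. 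For $i=1$ I would apply the same argument with the roles of $a_1$ and $a_2$ exchanged, or simply note that $a_1$ is fixed. Since every $a_i$ is then bounded, there are finitely many cubes, and the bound is effective.

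For (b), $u$ is determined by the factorization $a_1=e\cdot f$ with $e=v-u$, $f=v+u$ of the same parity. So $u^2=a_0+a_i$ takes at most $\tau(a_1)$ values, which bounds the number of choices. I read the stated $\tau(a_i)$ as counting divisor pairs of the fixed difference, i.e.\ $\tau(a_1)$ (or symmetrically for a fixed $a_i$). The main obstacle is bookkeeping: $a_0$ is not fixed in (2), so a factorization $e f=a_1$ pins down $u$ and hence the sum $a_0+a_i$, not $a_0$ and $a_i$ separately. I would handle this by choosing the factorization for the pair $(a_0,a_0+a_1)$ first, which fixes $a_0$, and then counting each $a_i$ through its own factorization. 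Each stage has at most $\tau(a_1)$ options, which gives both finiteness and the divisor-count bound.
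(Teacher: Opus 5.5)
Your proposal is correct and follows essentially the paper's route. Part (1) rests on the correspondence $y^2+z^2=x^2+t^2$, and your choice $t-y=2$, $z\equiv x \pmod 2$, $z$ large makes explicit the infinite family the paper only sketches. Part (2) is the divisor argument on $a_1=(v-u)(v+u)$ from the paper's lemma for fixed $a_0,a_1$, and your extra step of bounding $a_0$ via $a_1=(q-p)(q+p)$ correctly covers the fact that the theorem fixes only $a_1$. Reading $\tau(a_i)$ as $\tau(a_1)$ matches the paper, and bound (a) is meant for $i\ge 2$, which is what you prove; your remarks on $i=0,1$ do not literally yield that inequality, but nothing in the finiteness argument needs them.
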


Let us introduce the set
$$
\cal{H}:=\{(a_{0}, a_{1},a_{2},a_{3})\in\N\times \Z^{3}:\;H(a_{0}; a_{1}, a_{2}, a_{3})\subset \cal{S}\}
$$
containing all Hilbert cubes of dimension three, sitting in the set of squares.
\begin{thm}{\label{thm:H3}}
The number $H_3(N)$ of reduced Hilbert cubes in $\cal{H}$
of dimension $d=3$ and $0\leq a_i\leq N$, for $i=0, \ldots, d$ satisfies
$$
H_3(N)\gg N^{1/8}.
$$
\end{thm}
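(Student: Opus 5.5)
The plan is to produce an explicit parametric family of Hilbert cubes of dimension 3 in the squares, given by integer polynomials of degree $8$ in two parameters $(p,q)$. Counting pairs with $p,q\ll N^{1/8}$, where all $a_i\le N$, then gives many distinct reduced cubes. Writing $a_0=x^2$, the cube $H(a_0;a_1,a_2,a_3)$ lies in $\cal S$ exactly when the eight numbers $a_0+\sum_{i\in I}a_i$ are squares. Set
\[
X_\emptyset=x^2,\qquad X_{\{i\}}=y_i^2,\qquad X_{\{i,j\}}=z_{ij}^2,\qquad X_{\{1,2,3\}}=w^2 .
\]
Then $a_i=y_i^2-x^2$, and the conditions become
\[
y_i^2+y_j^2=x^2+z_{ij}^2\quad(3\text{ equations}),\qquad y_1^2+y_2^2+y_3^2=2x^2+w^2 .
\]
This is a system of four quadrics in $\mathbb P^7$, so it defines a surface. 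The first step is to look for a rational curve on this surface, or better a rational parametrization of a suitable component. A natural place to start is a special ansatz that forces some conditions automatically. For instance, I would use the Brahmagupta identity $(a^2+b^2)(c^2+d^2)=(ac-bd)^2+(ad+bc)^2=(ac+bd)^2+(ad-bc)^2$ to produce the two-representations structure for each face. Each face $y_i^2+y_j^2=x^2+z_{ij}^2$ is a $2$-dimensional Hilbert cube, so it is parametrized by such a product.

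My concrete first attempt is as follows. Fix a rational parametrization of the conditions involving $x,y_1,y_2,z_{12}$ using the sum-of-two-squares identity. The condition on $y_3$ then becomes the requirement that $y_3^2-x^2$ be simultaneously representable so that $y_3^2+y_1^2-x^2$, $y_3^2+y_2^2-x^2$ and $y_3^2+y_1^2+y_2^2-2x^2$ are squares. Over the function field in the first parameters, this asks for a rational point on an intersection of quadrics in $y_3$ and three auxiliary variables, which is a genus-$5$ curve in general. The trick will be to impose an extra relation among the earlier parameters, such as $y_1^2-x^2=\lambda(y_2^2-x^2)$ with a clever $\lambda$, or the choice $z_{12}=y_3$. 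This should make two of the conditions coincide or make the remaining curve reducible or of genus $0$. I would then specialize to a conic with an obvious rational point and parametrize it. I expect this search for a genus-$0$ fibration, or a rational curve of low degree on the surface, to be the main obstacle. Finding it is essentially computational (searching for cubes with small entries and interpolating families), and the degree of the resulting parametrization determines the exponent: total degree $8$ in two homogeneous parameters gives $N^{2/8}$ pairs. Allowing for losses from gcd, sign and the needed distinctness, one parameter direction may be lost, which would give only the claimed $N^{1/8}$. Alternatively a one-parameter family of degree $8$ in a single variable $t=p/q$, with $p,q\le N^{1/8}$, gives $\asymp N^{1/4}$ coprime pairs, and the claimed $N^{1/8}$ leaves room for those losses.

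With the polynomial family $a_i=A_i(p,q)$ in hand, homogeneous of degree $8$ (after clearing denominators), the remaining steps are routine. First, check positivity of $A_1,A_2,A_3$ on an open cone of $(p,q)$. Second, bound $|A_i(p,q)|\le C\max(|p|,|q|)^8\le N$ for $p,q\le cN^{1/8}$. Third, make the cube reduced by dividing by $g=\gcd(A_0,\dots,A_3)$. Dividing keeps every entry a square, since all eight entries share the factor $g$ and one can take $g$ as a square times a bounded factor. More carefully, for coprime $(p,q)$ the resultants show that $g$ divides a fixed constant up to square factors, so the reduced cube still lies in $[0,N]$. Fourth, show that distinct coprime pairs $(p,q)$ give distinct reduced cubes up to bounded multiplicity. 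This holds because the map $(p:q)\mapsto (A_0:A_1:A_2:A_3)\in\mathbb P^3$ is a nonconstant rational map and hence has finite fibres of bounded size. Counting coprime $(p,q)$ in the box $[1,cN^{1/8}]$ inside the positivity cone then gives $\gg N^{1/4}\ge N^{1/8}$ cubes, which proves the theorem. If the family found is instead genuinely of higher degree, say $16$, the same count gives exactly $N^{1/8}$ for a one-parameter family, which matches the statement.
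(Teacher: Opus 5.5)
Your proposal has a genuine gap: it never produces the parametric family on which everything else depends. The steps after ``With the polynomial family $a_i=A_i(p,q)$ in hand'' are routine. But in your approach, the existence of a rational curve on the variety of $3$-cubes whose coordinates are binary forms of degree at most $16$ is essentially the whole content of the theorem. You leave it as a computational search whose outcome you guess: the degree $8$ is read off from the exponent $1/8$, not derived.

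Your geometric starting point is also off. Four independent quadrics in $\mathbb{P}^7$ cut out a threefold, not a surface. The fibration you propose (fix the $2$-cube, then solve for $y_3$) has genus-$5$ fibres, as you note, and you do not show that the specializations you suggest help. For instance, $\lambda=1$, i.e.\ $a_1=a_2$, does make two conditions coincide. What remains, however, is a genus-one curve (two three-term progressions of squares with the same common difference, i.e.\ a congruent-number curve), not a conic. You would still need a section of infinite order over the function field, and then control of the degree of its pullback.

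The idea you are missing is the paper's choice of fibration. Write the $2$-cube as $(p,q,r,s)=\frac12(tz-xy,\,tz+xy,\,yz-tx,\,tx+yz)$ and set $a_3=P^2-p^2$. The sixth condition becomes $Q^2=P^2+txyz$, which is \emph{linear} in $t$, so one solves $t=(Q^2-P^2)/(xyz)$. Only the two conditions for $R,S$ remain. After substituting $(P,Q)=(\frac12(uy-m),\frac12(uy+m))$, they define a genus-one curve in $(m:y:R:S)$ over $\Q(u,x,z)$.

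The decisive step is to restrict the base to the conic $u^2-x^2+z^2=0$, over which this curve degenerates to the genus-$0$ curve (\ref{RSeq2}). Parametrizing the conic by $(u,x,z)=(c^2-d^2,c^2+d^2,2cd)$ and the fibre by $(G:H)$ yields (\ref{a1a2a3a4solution}), of degree $8$ in $(G,H)$. Specializing $(c,d)=(3,1)$, $(G,H)=(t,1)$ and applying $\phi_1$ and a permutation gives degree-$8$ polynomials $a_i(t)$. These are positive for $t\ge 7$ and coprime in $\Z[t]$, so the gcd is bounded, and taking $t\ll N^{1/8}$ gives the count.

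Your bookkeeping would work on this family; counting coprime $(G,H)$ as you propose would even give $\gg N^{1/4}$. One correction is needed. The gcd $g$ of a cube in the squares is automatically a perfect square. Every square divisible by $g$ is divisible by the least square multiple $g^*$ of $g$, so $g^*$ divides every $a_i$, and hence $g=g^*$. This, not ``a square times a bounded factor'', is what justifies dividing by $g$.
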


\begin{thm}{\label{thm:idenical base}}
There exist infinitely many reduced Hilbert cubes in ${\cal H}$  of dimension $d=3$, with
$a_1=a_2<a_3$,    see equation (\ref{a1a2solution}). 
Moreover, the number of such Hilbert cubes with $a_i \leq N$ 
is at least $cN^{1/10}$, for some positive constant $c$.
The same statement holds for $a_1<a_2=a_3$.

\end{thm}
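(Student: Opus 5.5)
The plan is to reduce the condition $a_1=a_2$ to a problem about arithmetic progressions of squares, and then parametrize. Write $a:=a_1=a_2$ and $b:=a_3$. The cube $H(a_0;a,a,b)$ consists of
\[
a_0,\ a_0+a,\ a_0+2a \quad\text{and}\quad a_0+b,\ a_0+b+a,\ a_0+b+2a .
\]
So the cube lies in $\cal S$ if and only if there are two three-term arithmetic progressions of squares, $x^2,y^2,z^2$ and $u^2,v^2,w^2$, with the \emph{same} common difference $a$. One then sets $a_0=x^2$ and $b=u^2-x^2$.

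A three-term progression of rational squares with difference $a$ corresponds to a rational point on the congruent number curve
\[
E_a:\; Y^2=X^3-a^2X.
\]
Indeed, $(y^2-x^2)(z^2-y^2)=a^2$ together with the usual $X=\ldots$ substitution gives the correspondence. Points differing by $2$-torsion give the same progression up to signs and order. A second, genuinely different progression therefore comes from a point $Q\notin P+E_a[2]$, for example $Q=2P$ or $Q=P+P'$.

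The steps are as follows.

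\textbf{Step 1.} Fix a one-parameter family in which $a$ is congruent with a known point. Take $a=pq(p^2-q^2)$, the area of the Pythagorean triangle from $(p,q)$, with its standard point $P(p,q)$ on $E_a$.

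\textbf{Step 2.} Compute the progression attached to $P$ and the one attached to $2P$ (or to $P+T$ for a suitable non-trivial combination, if that has lower degree). Both consist of rational squares with common difference $a$.

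\textbf{Step 3.} Clear denominators by multiplying everything by a common square $D^2$. This yields integers $a_0,a_1=a_2,a_3$ given by binary forms in $(p,q)$; I expect them to be of degree $20$. This is the family recorded as (\ref{a1a2solution}).

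\textbf{Step 4.} Check the ordering. Depending on the region of $(p,q)$, either $b>a$ (the case $a_1=a_2<a_3$) or $0<b<a$. In the second case, the cube $H(a_0;a,a,b)=H(a_0;b,a,a)$ gives the case $a_1<a_2=a_3$. Both regions should contain an open cone of $(p,q)$, since the relevant inequality is $u^2-x^2\gtrless a$, a comparison of forms.

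For the count, restrict to coprime $(p,q)$ in such a cone. Take $p,q\le N^{1/20}$, so that all $a_i$, being forms of degree $20$, are $\ll N$. This gives $\gg N^{2/20}=N^{1/10}$ parameter pairs.

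Two things must then be shown. First, reducedness: show that $\gcd(a_0,a,b)$ for coprime $(p,q)$ divides a fixed constant (a resultant computation), then divide out and lose only a constant factor. Second, injectivity: the map $(p,q)\mapsto$ cube must be finite-to-one with uniformly bounded fibres. This follows because, for instance, $a/a_0$ is a non-constant rational function of $p/q$ of bounded degree, and the divided-out gcd is bounded.

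The main obstacle I expect is Step 2/3. The explicit $2P$ (or alternative) progression must clear to forms of degree small enough to give exponent $1/10$, that is total degree $20$, rather than something larger. If the doubling gives too high a degree, I would instead try a family of $a$ with two independent obvious points, for example via two different Pythagorean parametrizations of the same area, and take $Q=P_1$. I would also need to confirm that the resulting $b$ is nonzero and that the two progressions really are distinct, so that the cube is non-degenerate.
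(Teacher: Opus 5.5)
Your proof is correct, but it takes a different route from the paper.

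\textbf{How the two constructions differ.} The paper never fixes the common difference. It writes the conditions as $p^2-2q^2+r^2=0$, $p^2-q^2-s^2+t^2=0$, $2p^2-2q^2-s^2+u^2=0$, and parametrizes the middle quadric by $(p,q,s,t)=(ad-bc,ac-bd,ad+bc,ac+bd)$. What remains is an intersection of two quadrics in $(a,b,r,u)$ over $\Q(c,d)$. This is an elliptic curve on which the single point $2P_0$ yields both progressions at once, namely the degree-$10$ forms (\ref{a1a2solution}).

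You instead fibre by the common difference $n=pq(p^2-q^2)$ and use two points of $E_n$. With $s=p^2+q^2$ and $w=p^4-6p^2q^2+q^4$, the progression of $P$ comes from $X(2P)=(s/2)^2$, and that of $2P$ from $X(4P)=\bigl((s^4+16n^2)/(4sw)\bigr)^2$. Clearing $4sw$ gives forms of degree $16$, not $20$. So your family is not (\ref{a1a2solution}), and your count actually gives $N^{1/8}$.

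\textbf{What each route buys.} In your route, distinctness of the two progressions is automatic: $E_n(\Q)_{\mathrm{tors}}=E_n[2]$, so $4P\neq\pm2P$ under every specialization. You also handle reducedness, bounded fibres and the count explicitly, which the paper's proof leaves implicit. The paper's route gives lower height. With both parameters it would even give $N^{1/5}$; the stated $N^{1/10}$ presumably comes from setting $d=1$.

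The two pictures are compatible. The paper's common difference is $-(c+d)^2PQ(P^2-Q^2)$ with $P=c^2+6cd+d^2$ and $Q=-3(c-d)^2$. One of its progressions is your standard one for $(P,Q)$, scaled by $c+d$. The other comes from a point of much smaller height than $2P$.

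\textbf{Small repairs.} First, when you divide by $g=\gcd(a_0,a,b)$, note that $g$ is automatically a square. If $\ell^e\parallel g$, every element of the cube is a square divisible by $\ell^e$, hence by $\ell^{2\lceil e/2\rceil}$. The same then holds for all $a_i$, which forces $e$ to be even. So the quotient cube still lies in $\cal{S}$.

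Second, nonemptiness of the cone $0<b<a$ does not follow from ``a comparison of forms''; it has to be checked, but it holds. With $q/p=\tan\theta$ one has $w=s^2\cos4\theta$ and $4n=s^2\sin4\theta$, and
\[
\frac{b}{a}=\frac{\bigl|(2-\gamma^2)^2-4\gamma^2\bigr|}{4\gamma^2\,|\sin4\theta|},\qquad \gamma=\cos4\theta .
\]
This vanishes at $\gamma=\pm(\sqrt3-1)$, so $b<a$ on an open cone. Concretely:
\begin{itemize}
\item $(p,q)=(2,1)$ gives the reduced cube $H(70^2;117600,117600,1319901)$, an instance of $a_1=a_2<a_3$;
\item $(p,q)=(5,1)$ gives the reduced cube $H(21658^2;227838237,1148740320,1148740320)$, an instance of $a_1<a_2=a_3$.
\end{itemize}

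Third, your parenthetical ``$P+T$'' contradicts your own correct remark that $2$-torsion translates give the same progression.
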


\begin{thm}{\label{thm:H_d-upper-bound}}
The number $H_2(N)$ of Hilbert cubes (in the set of squares) 
of dimension $2$ and $0\leq a_i\leq N$, for $i=0,1,2$ satisfies
\[N\log N \ll H_2(N) \ll N\log N.\]
The number $H_d(N)$ of Hilbert cubes of dimension $d$ and $0\leq a_i\leq N$, for $i=0, \ldots, d$ satisfies, for all $\varepsilon_d>0$,
\[H_d(N) \ll_d N^{1+\varepsilon_d}.\]
\end{thm}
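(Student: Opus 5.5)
We would prove Theorem~\ref{thm:H_d-upper-bound} by counting dimension-$2$ cubes through the sum-of-two-squares description given in the introduction, and then bounding dimension $d$ by dimension $2$.

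\textbf{Dimension $2$.} A cube $H(a_0;a_1,a_2)\subset\cal{S}$ with $0\le a_i\le N$ is the same thing as a quadruple $(x,y,z,t)$ of nonnegative integers with
\[
a_0=x^2,\qquad a_1=y^2-x^2,\qquad a_2=z^2-x^2,\qquad x^2+t^2=y^2+z^2 .
\]
The map from cubes to quadruples is injective, since $x,y,z$ determine the cube. We only count cubes with $a_1,a_2>0$, as assumed in the introduction; allowing $a_i=0$ would only add $O(N)$ further cubes.

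For the upper bound, note that $x^2\le N$, $y^2,z^2\le 2N$ and $t^2\le 3N$. Put $m=x^2+t^2\le 4N$. Then $H_2(N)$ is at most the number of pairs of representations of integers $m\le 4N$ as sums of two squares, that is,
\[
H_2(N)\le \sum_{m\le 4N} r_2(m)^2 .
\]
We then invoke the classical estimate $\sum_{m\le X} r_2(m)^2\sim cX\log X$, which follows from $r_2(m)=4\sum_{d\mid m}\chi_{-4}(d)$ and a standard Dirichlet series computation. This gives $H_2(N)\ll N\log N$.

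For the lower bound, we restrict to $x,y,z,t\in[\sqrt{N}/4,\sqrt{N}/2]$ with $x<y,z$. In this range every $a_i$ is at most $N/4$, so all cubes counted are admissible. It then suffices to show that the number of solutions of $x^2+t^2=y^2+z^2$ with the variables in this range, and with $x<\min(y,z)$, is $\gg N\log N$. This is again a second moment of $r_2$, now restricted to lattice points in a fixed annular sector. By a standard localized version of the second-moment estimate, the count is $\gg N\log N$.

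The cleanest parametrization uses Gaussian integers. Write
\[
x+it=\alpha\beta,\qquad y+iz=\alpha\bar\beta ,
\]
so that the count becomes a count of pairs $(\alpha,\beta)$ with $|\alpha\beta|\asymp\sqrt N$ and with arguments in suitable ranges. The number of such pairs is $\asymp\sum_{|\alpha|\le N^{1/2}}N/|\alpha|^2\asymp N\log N$. The angle restrictions are met by a positive proportion of these pairs, and the trivial solutions $\{x,t\}=\{y,z\}$ number only $O(N)$.

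I expect this lower-bound step to be the main obstacle. Making the angular restrictions and the removal of degenerate solutions uniform requires some care.

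\textbf{Dimension $d$.} A cube of dimension $d$ is determined by $(a_0,a_1,a_2)$ together with $a_3,\dots,a_d$. For each $j\ge3$, both $a_0$ and $a_0+a_j$ are squares. Writing $a_0=x^2$ and $a_0+a_j=w^2$, the condition $a_j\le N$ forces
\[
w^2-x^2=(w-x)(w+x)=a_j\le N .
\]
This alone is not enough to control the number of $a_j$, so we use the second sub-cube $\{a_0,a_0+a_1,a_0+a_j,a_0+a_1+a_j\}$. Setting $u=a_0+a_j$, which is a square $w^2$, the number $u+a_1$ must also be a square. This means $a_1=(s-w)(s+w)$ for some $s$, so $w$ is determined by a factorization of $a_1$. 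Hence, given $(a_0,a_1)$, there are at most $\tau(a_1)$ choices for each $a_j$, which is exactly Theorem~\ref{thm:finite}(2)(b).

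Combining this with the dimension-$2$ count, we get
\[
H_d(N)\le \sum_{(a_0,a_1,a_2)} \tau(a_1)^{d-2}\le H_2(N)\cdot\max_{n\le N}\tau(n)^{d-2}\ll_d N^{1+\varepsilon_d},
\]
using the divisor bound $\tau(n)\ll_\varepsilon n^{\varepsilon}$.
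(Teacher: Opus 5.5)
Your bound for $H_d(N)$ is the paper's argument. The paper counts the $O(N)$ pairs $(a_0,a_1)=(p^2,q^2-p^2)$ and allows at most $\tau(a_1)$ choices for each of $a_2,\dots,a_d$. You start from $H_2(N)$ and use $\tau(a_1)^{d-2}$, which changes nothing.

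For dimension $2$ your route is genuinely different, and it covers more than the paper does. The paper's proof stops at $N^{1+\varepsilon}$ and never establishes $N\log N\ll H_2(N)\ll N\log N$. Its divisor argument alone would not give the sharp upper bound either. The crude count $\sum_{p,q}\tau(q^2-p^2)$ is of order $N(\log N)^2$, since $\tau(mn)=\tau(m)\tau(n)$ for coprime $m=q-p$ and $n=q+p$, and these range essentially freely up to size $\sqrt N$. To save the logarithm one must use that $a_2\le N$ confines the relevant divisors of $a_1$ to a window. Your reformulation as $x^2+t^2=y^2+z^2$, combined with $\sum_{m\le X}r_2(m)^2\sim 4X\log X$, does exactly this.

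The lower-bound sketch needs one repair. The map $(\alpha,\beta)\mapsto(x,t,y,z)$ is not boundedly-many-to-one: $(k\alpha,\beta)$ and $(\alpha,k\beta)$ give the same quadruple for every $k\in\N$. So $\#\{(\alpha,\beta)\}\gg N\log N$ does not by itself count solutions.

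\emph{Fix for multiplicity.} Restrict to $\beta=b_1+ib_2$ with $\gcd(b_1,b_2)=1$. If $(\alpha,\beta)$ and $(\alpha',\beta')$ give the same quadruple, dividing the two relations gives $\beta/\bar\beta=\beta'/\bar\beta'$. Hence $\beta'\bar\beta$ is real, so $\beta'/\beta\in\Q$, and primitivity forces $\beta'=\pm\beta$.

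\emph{Choice of ranges.} Take $\arg\alpha=\pi/4+\phi$ and $\arg\beta=\theta$ with $0<\phi,\theta<\eta$. Require $|\alpha\beta|$ to lie within a factor $1\pm\eta$ of $\tfrac38\sqrt{2N}$, and $|\alpha|\le\varepsilon\sqrt N$ with $\varepsilon$ small in terms of $\eta$. Then the $\beta$-region is an annular sector of area $\asymp\eta^2N/|\alpha|^2$ and large inner radius, so it contains $\gg\eta^2N/|\alpha|^2$ primitive lattice points.

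\emph{Verification.} With these ranges all four coordinates lie in $[\sqrt N/4,\sqrt N/2]$. A direct computation from $x=\rho\cos(\pi/4+\phi+\theta)$, $y=\rho\cos(\pi/4+\phi-\theta)$ and $z=\rho\sin(\pi/4+\phi-\theta)$ gives $x<y\iff\theta>0$ and $x<z\iff\phi>0$. So the required ordering holds. The degenerate solutions, which correspond to $\beta$ real or $\alpha\in\Z(1+i)$, are excluded automatically. The sign ambiguity $\beta\mapsto-\beta$ is also excluded, so distinct pairs give distinct cubes. Summing $\eta^2N/|\alpha|^2$ over the $\alpha$ in the sector then gives $\gg_\eta N\log N$ cubes.
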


From the parametric family of Hilbert cubes 
in dimension $3$ one can deduce the following:
\begin{cor}{\label{cor-fractions}}
Let $\cal{H}_{+}=\cal{H}\cap \N_{+}^{4}$. For each $i, j\in\{0, 1, 2, 3\}$ with $i<j$, the set
$$
H_{i,j}:=\left\{\frac{a_{i}}{a_{j}}:\;(a_{0}, a_{1}, a_{2}, a_{3})\in\cal{H}_{+}\right\}\subset \Q_{+}
$$
is dense in the Euclidean topology in the set $\R_{+}$.
\end{cor}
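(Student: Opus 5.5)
The plan is to deduce the density of each set $H_{i,j}$ from the parametric family of dimension-$3$ cubes behind Theorem~\ref{thm:H3}. I would view that family as a rational parametrization $t\mapsto (a_0(t),a_1(t),a_2(t),a_3(t))$, with each $a_k(t)\in\Q[t]$ (or $\Q(t)$), such that every expression $a_0+\sum \varepsilon_k a_k$ is the square of a rational function of $t$. Since every cube condition is homogeneous, a rational point $t\in\Q$ gives a point of $\cal{H}_+$ after two steps: clear denominators by multiplying all $a_k$ by a common square, and check that all $a_k(t)>0$. Under this scaling each ratio $a_i/a_j$ is unchanged and equals the rational function $r_{ij}(t)=a_i(t)/a_j(t)$. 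So it suffices to show that, for each pair $i<j$, the image
\[
r_{ij}\bigl(\{t\in\Q:\ a_k(t)>0 \text{ for } k=0,1,2,3\}\bigr)
\]
is dense in $\R_+$.

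First I would identify the open set $U\subset\R$ on which all four $a_k(t)>0$. This is a finite union of open intervals, cut out by the real roots of the polynomials $a_k$. Next, for each pair $(i,j)$ I would study the continuous map $r_{ij}:U\to\R_+$. By continuity and the density of $\Q\cap U$ in $U$, the set $r_{ij}(\Q\cap U)$ is dense in $r_{ij}(U)$. By the intermediate value theorem, $r_{ij}(U)$ contains $(0,\infty)$ as soon as some interval component of $U$ carries limits $0$ and $\infty$ of $r_{ij}$ at its endpoints. These limits come from an endpoint that is a root of $a_i$ (giving $0$) and an endpoint that is a root of $a_j$ or $\pm\infty$ (giving $\infty$, when $\deg a_i>\deg a_j$). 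A weaker condition also suffices: the union of the images over several components covers $(0,\infty)$.

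The main obstacle is that a single family need not work for all six pairs. A component of $U$ may be bounded only by roots of the other two coordinates, so that $r_{ij}$ stays bounded away from $0$ or $\infty$. I would try three remedies. First, use the symmetries of $\cal{H}$: permuting $a_1,a_2,a_3$ preserves $\cal{H}_+$, and so does the reflection sending $a_0\mapsto a_0+a_k$, $a_k\mapsto -a_k$ (when it keeps positivity). This reduces the six pairs to essentially the two types $a_0/a_j$ and $a_1/a_2$. Second, if the basic one-parameter family is too rigid, use a two-parameter version, a family over a rational surface, or a family whose fibres are positive-rank elliptic curves. There density in the real locus follows because the rational points of a positive-rank elliptic curve are dense in the identity component of its real points. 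Third, take a union of several families.

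Concretely, I would compute the real roots of the $a_k(t)$, record their ordering, and verify for each type of ratio that some positivity interval has the required endpoint behaviour. This finite sign-and-limit check is where the actual verification lies. The remaining steps are formal consequences of continuity and homogeneity.
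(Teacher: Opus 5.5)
Your plan uses the same mechanism as the paper's proof. You restrict an explicit parametric family to an interval on which all $a_k>0$, note that the ratio tends to $0$ at one end and to $+\infty$ at the other, and conclude by continuity and the density of $\Q$. Two differences are worth recording.

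First, the paper does not use the $\Sigma_3$-symmetry. It works with the family (\ref{a1a2solution}): it checks $a_0/a_1$ on $(-3-2\sqrt{2},x_{\min})$ and checks $a_0/a_3$ and $a_1/a_3$ on $(1,\sqrt{5})$. It then deduces $H_{0,2}$ and $H_{2,3}$ from $a_1=a_2$. Because $a_1=a_2$ in that family, it must go back to the four-parameter family (\ref{a1a2a3a4solution}) at $(c,d,G,H)=(x,1,2,1)$ to obtain $H_{1,2}$. Your observation that $\phi_\sigma$ preserves $\mathcal{H}_{+}$ gives $H_{0,1}=H_{0,2}=H_{0,3}$ and $H_{1,2}=H_{1,3}=H_{2,3}$. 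This removes the detour: the paper's single interval $(1,\sqrt{5})$, used for $a_0/a_3$ and $a_1/a_3$, already settles all six cases.

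Second, you left undone the only step with real content. Your fallback remedies (two-parameter families, density of rational points on positive-rank elliptic curves, unions of families) suggest you were unsure it works, but none of them is needed. The family you named already succeeds.
\begin{itemize}
\item Take $a_0,\dots,a_3$ as in the proof of Theorem~\ref{thm:H3}, and let $t_3=(20+\sqrt{481})/9\approx 4.66$ be the larger root of the factor $9t^2-40t-9$. This is the largest real root of $a_3$.
\item The largest real roots of $a_1$ and $a_2$ are $20/9$ and $3/2$, and all leading coefficients are positive. So $a_1,a_2,a_3>0$ on $(t_3,\infty)$.
\item On the same interval $a_0\ge 0$, with a single zero $t_0\approx 19.6$ coming from the quartic factor.
\item Hence $a_0/a_3$ runs from $+\infty$ to $0$ on $(t_3,t_0)$.
\item Also $a_1/a_3$ runs from $+\infty$ to $0$ on $(t_3,\infty)$, because $\deg a_1=7<8=\deg a_3$.
\end{itemize}
Combined with your symmetry reduction, this is a complete proof.

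A minor point: the reflection $a_0\mapsto a_0+a_k$, $a_k\mapsto -a_k$ never maps $\mathcal{H}_{+}$ into itself. It therefore contributes nothing to the reduction, though it is useful for producing families, as in the proof of Theorem~\ref{thm:H3}.
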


\begin{proof}[Proof of Theorem \ref{thm:H_d-upper-bound}]
If $a_{0}=p^2, a_{0}+a_{1}=q^2$, then there are at most $\sqrt{N}$ choices for each of $a_0\leq N$ and $a_1\leq N$.
Having fixed $a_1=q^2-p^2$, the number of choices of $a_2, a_3, \ldots , a_d$ is bounded by the number of divisors $\tau(a_1)$ each, which easily gives the upper bound: for every $\epsilon>0$ 
$H_d(N)\leq N\exp(c_d \frac{\log N}{\log \log N})=O(N^{1+\varepsilon})$.
It should be possible to find a stronger upper bound.
\end{proof}

The question about Hilbert cubes in the set of quadratic residues has been studied by
\cite{Alsetri-Shao, Dietmann-Elsholtz:2012,
Dietmann-Elsholtz-Shparlinski,
HegyvariandSarkozy:1999, Schoen}.

A closely related question is about sumsets $A+B$ contained in the set of squares. There is some information 
in \cite{AlonandAngelandBenjaminiandLubetzky:2012, DujellaandElsholtz:2013}.


\section{Notation}
Let us introduce the following notation.

\begin{equation}\label{Hsystem}
\begin{aligned}
p^2&=a_{0},\\
q^2&=a_{0}+a_{1},\\
r^2&=a_{0}+a_{2},\\
s^2&=a_{0}+a_{1}+a_{2},\\
P^2&=a_{0}+a_{3},\\
Q^2&=a_{0}+a_{1}+a_{3},\\
R^2&=a_{0}+a_{2}+a_{3},\\
S^2&=a_{0}+a_{1}+a_{2}+a_{3},
\end{aligned}
\end{equation}
for some integers $p, q,r,s, P,Q,R, S$.

\section{Small solutions}\label{sec2}
As usual, facing a Diophantine problem it is natural to look for small integer solutions using  a computational approach. However, before we present the results of our initial Let us note that there are some natural maps which act on the set $\cal{H}$. Indeed, each of the following maps  
\begin{align*}
\phi_{1}&:\;\cal{H}\ni (a_{0}, a_{1}, a_{2}, a_{3})\mapsto (a_{0}+a_{1}, -a_{1}, a_{2}, a_{3})\in \cal{H},\\
\phi_{2}&:\;\cal{H}\ni (a_{0}, a_{1}, a_{2}, a_{3})\mapsto (a_{0}+a_{2}+a_{3}, a_{1}, -a_{3}, -a_{2})\in \cal{H},\\
\phi_{\sigma}&:\;\cal{H}\ni (a_{0}, a_{1}, a_{2}, a_{3})\mapsto (a_{0}, a_{\sigma(1)}, a_{\sigma(2)}, a_{\sigma(3)})\in \cal{H},
\end{align*}
where $\sigma$ is an element of the permutation group $\Sigma_{3}$ of three elements, act on $\cal{H}$. We have $\phi_{i}^{(2)}=\op{id}_{\cal{H}}$ for $i=1, 2$ and for each $\sigma\in \Sigma_{3}$ we have $\phi_{\sigma}^{(6)}=\op{id}_{\cal{H}}$. We thus see that there is a group of order $2\times 2\times 6=24$ generated by the above maps which acts on $\cal{H}$. Moreover, for each positive integer $m$ we have an additional map
$$
\psi_{m}:\;\cal{H}\ni (a_{0}, a_{1}, a_{2}, a_{3})\mapsto  (m^2a_{0}, m^2a_{1}, m^2a_{2}, m^2a_{3})\in\cal{H}.
$$
As a consequence, after applications of suitable compositions of maps, without loss of generality we can assume that $0< a_{1}\leq a_{2}\leq a_{3}$ and $\gcd(a_{0}, a_{1}, a_{2}, a_{3})=1$ and obtain a reduced tuple.

First of all, let us start with the following simple
\begin{lem}
Let $m\in\N_{\geq 2}$ be fixed. Then, for any given pair of positive integers $a_{0}, a_{1}$, with $a_{0}<a_{1}$ there are only finitely many Hilbert cubes of dimension $m$ in the set of squares.
\end{lem}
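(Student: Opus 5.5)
Fix $a_0$ and $a_1$. We show that every further generator $a_i$ ($2\le i\le m$) of a Hilbert cube $H(a_0;a_1,\ldots,a_m)\subset\cal{S}$ lies in a finite set determined by $(a_0,a_1)$. This bounds the number of possible tuples $(a_2,\ldots,a_m)$, and hence the number of cubes, by $|F|^{m-1}$.

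First we extract the relevant conditions. Since $m\ge 2$, for each $i\ge 2$ the cube contains the four elements $a_0$, $a_0+a_1$, $a_0+a_i$ and $a_0+a_1+a_i$. In particular
\[
a_0+a_i=x^2,\qquad a_0+a_1+a_i=y^2
\]
for some nonnegative integers $x,y$. Subtracting gives
\[
y^2-x^2=(y-x)(y+x)=a_1 .
\]
The hypothesis $0<a_0<a_1$ ensures that $a_1\neq 0$; in fact $a_1>0$ is all that is needed here. The factor pair $(y-x,\,y+x)$ is therefore a pair of complementary positive divisors $(u,v)$ of $a_1$ with $uv=a_1$ and $u\equiv v \pmod 2$. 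Such a pair determines $x=(v-u)/2$, and then $a_i=x^2-a_0$. So each $a_i$ takes at most $\tau(a_1)$ values.

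Explicitly, the largest possible $a_i$ arises from $u=1$ or $u=2$. This gives $a_i\le \frac14(a_1-1)^2-a_0$, matching the bound stated in Theorem~\ref{thm:finite}. The count is at most $\tau(a_1)^{m-1}$ cubes (unordered, or up to order of $a_2,\dots,a_m$).

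The only points needing care are two. First, we must confirm that $a_1\neq 0$, since otherwise the difference-of-squares equation is $0=0$ and gives no information; this is what the hypothesis supplies. Second, we must handle negative $a_i$ if they are allowed in $\cal{H}$. In that case $a_0+a_i=x^2\ge 0$ still gives the lower bound $a_i\ge -a_0$, and the divisor argument applies verbatim, so finiteness persists.

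I do not expect a genuine obstacle. The condition $a_0<a_1$ in the statement appears stronger than needed; the argument uses only $a_1>0$. The main thing is to state clearly that $x$ is determined by the divisor pair.
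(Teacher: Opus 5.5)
Your proof is correct and is essentially the paper's argument. The paper factors $a_1=s^2-r^2=(s-r)(s+r)$ to get at most $\tau(a_1)$ choices for $a_2$ and then says the general case follows by iterating; you make that iteration explicit by applying the same factorization to each subcube $H(a_0;a_1,a_i)$, which gives the bound $\tau(a_1)^{m-1}$.
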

\begin{proof}
We start with the case $m=2$. We have that $H(a_{0}; a_{1}, a_{2})\subset \cal{S}$ if and only if
$$
a_{0}=p^2,\quad  a_{0}+a_{1}=q^{2}, \quad  a_{0}+a_{2}=r^2, \quad a_{0}+a_{1}+a_{2}=s^2
$$
for some $p, q, r, s$. Because $a_{0}, a_{1}$ are fixed, $p, q$ are also fixed. Subtracting the third equation from the fourth we get that $a_{1}=s^2-r^2=(s-r)(s+r)$. In other words, if $d$ is a divisor of $a_{1}$ then the corresponding values of $r, s$ are of the form
$$
r=\frac{1}{2}\left(\frac{a_{1}}{d}-d\right),\quad s=\frac{1}{2}\left(\frac{a_{1}}{d}+d\right).
$$
Having possible $r, s$ we take $a_{2}=r^2-a_{0}$. 
Because the number of possible values of $r$ is bounded by $\tau(a_{1})$, where $\tau(n)$ is the number of divisors of $n$, the number of possible values of 
$a_{2}$ is also bounded by $\tau(a_{1})$. This proves the lemma for $m=2$. However, by iterating this argument, we see that for any given $a_{0}, a_{1}$ and $m$, the number of Hilbert cubes $H(a_{0}; a_{1}, a_{2},\ldots, a_{m})$ contained in the set of squares is finite.
\end{proof}

Although very simple, the method from the proof of the above lemma can be used to write an efficient algorithm which allows to find all Hilbert cubes $H(a_{0}; a_{1}, a_{2}, a_{3})\subset \cal{S}$ with $a_{0}, a_{1}\leq N$ for given $N$ in time $O_{\varepsilon}(N^{1+\varepsilon})$
The algorithm runs as follows. First of all, for a given $a_{1}$ we compute the set 
$$
D_{small}(a_{1})=\{d\in\N:\;d|a_{1}\;\mbox{and}\; d^2<a_{1}\}=\{d_{1}, d_{2}, \ldots, d_{m}\},
$$
in natural order,
where $m=m(a_{1})=\#D_{small}(a_{1})$. Next, for given $d_{i}, d_{j}\in D_{small}(a_{1}), i<j$, we check whether 
\begin{align*}
&c_{1}(i,j)=\frac{1}{4}\left(\frac{a_{1}}{d_{i}}-d_{i}\right)^2
+\frac{1}{4}\left(\frac{a_{1}}{d_{j}}-d_{j}\right)^2-a_{0}=\square,\\
&c_{2}(i,j)=a_{1}+\frac{1}{4}\left(\frac{a_{1}}{d_{i}}-d_{i}\right)^2
+\frac{1}{4}\left(\frac{a_{1}}{d_{j}}-d_{j}\right)^2-a_{0}=\square.
\end{align*}

If these conditions are satisfied, then we put 
$$
a_{2, i}=\frac{1}{4}\left(\frac{a_{1}}{d_{i}}-d_{i}\right)^2-a_{0},\quad a_{3, j}=\frac{1}{4}\left(\frac{a_{1}}{d_{j}}-d_{j}\right)^2-a_{0}.
$$
and, provided that $H(a_{0}; a_{1})\subset \cal{S}$, we get a Hilbert cube $H(a_{0}; a_{1}, a_{2,i}, a_{3, j})\subset \cal{S}$. 

The pseudo-code of our algorithm has the following form. We implemented it in Pari GP \cite{PARI2}.

\begin{algorithm}[H]
\caption{Finding Hilbert cubes $H(a_{0};a_{1},a_{2},a_{3}) \subset \mathcal{S}$ with $a_{0}<a_{1}\leq N$}
\begin{algorithmic}[1]
\Procedure{HilbertCubes}{$N$}
  \For{$p = 1$ \textbf{to} $\sqrt{N}$; $a_0=p^2$}
        \For{$a_{1} = a_{0}+1$ \textbf{to} $N$}
      \If{$a_{0}+a_{1}$ is not a square} \State \textbf{continue} \EndIf
      \State $D \gets \{ d \in \mathbb{N} : d \mid a_{1}, \; d^{2}<a_{1}\}$
      \ForAll{pairs $(d_i,d_j)$ with $i<j$ from $D$}
        \State $r_i \gets \tfrac{1}{2}\left(\tfrac{a_{1}}{d_i} - d_i\right)$
        \State $r_j \gets \tfrac{1}{2}\left(\tfrac{a_{1}}{d_j} - d_j\right)$
        \If{$r_i$ or $r_j$ not integer} \State \textbf{continue} \EndIf
        \State $a_{2} \gets r_i^{2} - a_{0}$, \quad $a_{3} \gets r_j^{2} - a_{0}$
        \If{$a_{2} \leq 0$ or $a_{3} \leq 0$} \State \textbf{continue} \EndIf
        \If{$a_{0}+a_{2},\; a_{0}+a_{3},\; a_{0}+a_{2}+a_{3},\;
              a_{0}+a_{1}+a_{2},\; a_{0}+a_{1}+a_{3},\; a_{0}+a_{1}+a_{2}+a_{3}$ are all squares \textbf{and} $\gcd(a_0, a_1, a_2, a_3)=1$}
          \State \textbf{output} $[a_{0},\op{sort}(a_{1},a_{2},a_{3})]$
        \EndIf
      \EndFor
    \EndFor
  \EndFor
\EndProcedure
\end{algorithmic}
\end{algorithm}

\begin{exam}
{\rm To see the described algorithm in action let us consider the following examples. 
Take $a_{0}=1, a_{1}=8099=90^2-1$, i.e., $H(a_{0}; a_{1})\subset \cal{S}$. We have 
$$
D_{small}(a_{1})=\{1, 7, 13, 89\}=\{d_{1}, d_{2}, d_{3}, d_{4}\}.
$$
The corresponding values of $c_{1}(i, j), c_{2}(i, j)$ are presented in tables below. 
\begin{table}[htbp]
\centering
\begin{tabular}{|l|llll|}
\hline
 $i\backslash j$ & 1       & 2        & 3        & 4\\
 \hline
   1           &32788801 & 16725025 & 16487425 & $4049^2$ \\
    2          &&661249 & 423649 & $575^2$  \\
     3         &&&186049 & $305^2$  \\
      4        &&&&1   \\
      \hline
\end{tabular}
\caption{Values of $c_{1}(i, j), 1\leq i<j\leq 4$ for $a_{0}=1, a_{1}=8099$.}
\end{table}

\begin{table}[htbp]
\centering
\begin{tabular}{|l|llll|}
\hline
 $i\backslash j$ & 1       & 2        & 3        & 4\\
 \hline
1 &32796897 & 16733121 & 16495521 & 16402497 \\
2 &&669345 & 431745 & 338721  \\
3 &&&194145 & 101121  \\
4 &&&&8097 \\
     \hline
\end{tabular}
\caption{Values of $c_{2}(i, j), 1\leq i<j\leq 4$ for $a_{0}=1, a_{1}=8099$}
\end{table}

We note that $c_{1}(i, j)$ is a square if and only if $j=4$. However, the value $c_{2}(i, 4)$ is not a square for any $i=1, 2, 3,4 $. In consequence, there cannot be any Hilbert cube of dimension $d\geq 2$ with $a_{0}=1, a_{1}=8099$.

To see other example let us take $a_{0}=4$ and $a_{1}=3360=58^2-4$. We then have
$$
D_{small}(a_{1})=\{1, 2, 3, 4, 5, 6, 7, 8, 10, 12, 14, 15, 16, 20, 21, 24, 28, 30, 32, 35, 40, 42, 48, 56\}.
$$
Performing all necessary calculations we find that 
$$
c_{1}(13,16)=113^2,\quad c_{2}(13,16)=127^2
$$ 
(related to the divisors $d_{13}=16, d_{16}=24$ of $a_{1}$) and we get the corresponding Hilbert cube of dimension 3 is $\{4,3360,9405,3360\}$. Interestingly, there is another solution given by 
$$
c_{1}(16,17)=74^2,\quad c_{2}(16,17)=94^2
$$ 
(related to the divisors $d_{16}=24, d_{17}=28$ of $a_{1}$) and we get the corresponding Hilbert cube of dimension 3 is $\{4, 2112, 3360,3360\}$.}
\end{exam}

The above result allows us to find all reduced Hilbert cubes of dimension 3 with $a_{0}, a_{1}<10^8$. To do that we used the described algorithm with $a_{0}=p^2, a_{1}=q^2-p^2$ with $p\leq 10^4,  p<q\leq \sqrt{p^2+10^{8}}$. Under these assumptions we found 4644 reduced Hilbert cubes of dimension three sitting in the set of squares (the computations took about one day). None of these cubes can be extended to a Hilbert cube of dimension 4. In the table below we present all three dimensional Hilbert cubes $H(a_{0}; a_{1}, a_{2}, a_{3})$ with $\op{max}\{a_{0}, a_{1}, a_{2}, a_{3}\}\leq 2^{18}.$

In this context it is natural to consider the quantity
$$
C_{3}(N)=\#\{(a_{0}, a_{1})\in [0,N]^2:\;a_{0}<a_{1}, H(a_{0}; a_{1}, a_{2}, a_{3})\subset\cal{S}\;\mbox{for some}\;a_{2}, a_{3}\in\N\}.
$$
In the table below we present the values of $H_{3}(m)$ and $C_{3}(m)$ of the number of reduced Hilbert cubes $H(a_{0}; a_{1}, a_{2}, a_{3})$ with $\op{max}\{a_{0}, a_{1}, a_{2}, a_{3}\}\leq m$ and  $\op{max}\{a_{0}, a_{1}\}\leq m$ respectively, for $m=2^{n}$ and $n=15,\ldots, 26 $.
\begin{table}[htbp]
\centering
\begin{tabular}{|c|c|c|c|c|c|c|c|c|c|c|c|c|}
\hline
 $n$         & 15 & 16 & 17 & 18 & 19 & 20 & 21 & 22 & 23 & 24 & 25 & 26 \\
 \hline 
$H_{3}(2^{n})$  & 8 & 13 & 28 & 39 & 65 & 99 & 147 & 239 & 363 & 529 & 792 & 1173 \\
 \hline
 $C_{3}(2^{n})$   &  27 & 51 & 91 & 136 & 228 & 349 & 541 & 852 & 1278 & 1851 & 2730 & 3887\\
 \hline
\end{tabular}
\caption{Values of $H_{3}(2^n)$ and $C_{3}(2^n)$ for $n\in\{15,\ldots, 26\}$.}
\end{table}
\begin{figure}[!ht]
    \centering
    \includegraphics[width=0.9\linewidth]{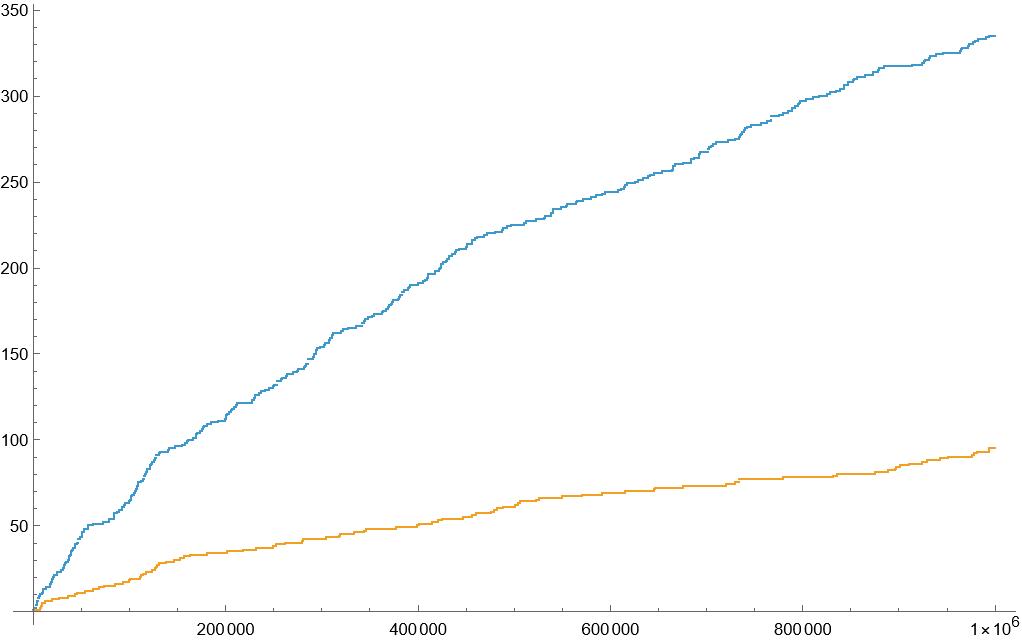}
    \caption{Plot of the functions $C_{3}(n)$ (blue) and $H_{3}(n)$ (yellow) for $n\leq 10^6$.}
    \label{fig:Pic3}
\end{figure}

\begin{table}[htbp]
\centering
\begin{tabular}{|c|c|c|c||c|c|c|c|}
\hline
$a_0$ & $a_1$ & $a_2$ & $a_3$ & $a_0$ & $a_1$ & $a_2$ & $a_3$ \\
\hline
 1 & 528 & 840 & 840 & $73^2$ & 31920 & 31920 & 123552 \\
 $2^2$ & 3360 & 3360 & 9405 & $93^2$ & 200200 & 407376 & 488376 \\
 $2^2$ & 4485 & 7392 & 20160 & $103^2$ & 43680 & 43680 & 100947 \\
 $2^2$ & 17952 & 26565 & 131040 & $103^2$ & 105672 & 207480 & 333960 \\
 $2^2$ & 100485 & 209760 & 376992 & $107^2$ & 18480 & 28152 & 127680 \\
 $5^2$ & 32736 & 46200 & 54264 & $107^2$ & 57195 & 364320 & 446880 \\
 $5^2$ & 78936 & 218064 & 319200 & $107^2$ & 102120 & 247632 & 414960 \\
 $7^2$ & 5280 & 5280 & 9555 & $109^2$ & 19803 & 36960 & 163680 \\
 $7^2$ & 40755 & 108192 & 252960 & $120^2$ & 106704 & 426496 & 482625 \\
 $7^2$ & 127400 & 180576 & 180576 & $125^2$ & 22011 & 84864 & 117600 \\
 $10^2$ & 2400 & 4389 & 8736 & $127^2$ & 33600 & 46371 & 280896 \\
 $13^2$ & 10440 & 15960 & 62832 & $129^2$ & 17955 & 40480 & 157248 \\
 $17^2$ & 45936 & 134400 & 249711 & $129^2$ & 177840 & 366520 & 506088 \\
 $23^2$ & 41496 & 138600 & 138600 & $130^2$ & 94656 & 181125 & 421344 \\
 $31^2$ & 10920 & 10920 & 26928 & $139^2$ & 115368 & 182280 & 318240 \\
 $31^2$ & 38640 & 57120 & 99528 & $141^2$ & 37240 & 201960 & 201960 \\
 $34^2$ & 3885 & 73920 & 73920 & $146^2$ & 127680 & 127680 & 231693 \\
 $46^2$ & 3360 & 3360 & 7293 & $151^2$ & 48488 & 128520 & 128520 \\
 $46^2$ & 5280 & 9765 & 12768 & $158^2$ & 87261 & 184800 & 479136 \\
 $47^2$ & 122400 & 122400 & 401016 & $162^2$ & 155232 & 246240 & 279565 \\
 $53^2$ & 21216 & 28875 & 153216 & $167^2$ & 201552 & 425040 & 425040 \\
 $58^2$ & 44160 & 94605 & 110880 & $197^2$ & 31416 & 73416 & 85800 \\
 $59^2$ & 25080 & 65688 & 93240 & $201^2$ & 53235 & 112480 & 399168 \\
 $62^2$ & 6765 & 43680 & 43680 & $201^2$ & 170280 & 294840 & 476560 \\
 $62^2$ & 7392 & 16320 & 45885 & $213^2$ & 19656 & 19656 & 68200 \\
 $62^2$ & 8925 & 23712 & 36960 & $226^2$ & 322245 & 350880 & 523488 \\
 $64^2$ & 62985 & 270480 & 346368 & $235^2$ & 83904 & 96096 & 112875 \\
 $67^2$ & 47040 & 86112 & 126555 & $238^2$ & 47040 & 74400 & 460317 \\
 $67^2$ & 52632 & 64680 & 503880 & $282^2$ & 78880 & 125685 & 218592 \\
 $67^2$ & 89760 & 120120 & 146832 & $337^2$ & 84456 & 231000 & 456456 \\
 $69^2$ & 51408 & 196840 & 344520 & $503^2$ & 127680 & 127680 & 262515 \\
 $70^2$ & 36309 & 79200 & 114816 & $595^2$ & 116571 & 501600 & 501600 \\
 $73^2$ & 29640 & 224112 & 304920 &  &  &  &  \\
 \hline
\end{tabular}
\caption{Reduced Hilbert cubes $H(a_{0}; a_{1}, a_{2}, a_{3})\subset \cal{S}$ with $\op{max}\{a_{0}, a_{1}, a_{2}, a_{3}\}\leq 2^{19}.$ }  
\end{table}
\bigskip
Our computations strongly suggest that there should be infinitely many reduced Hilbert cubes of dimension three contained in the set of squares. Moreover, among the solutions there are many solutions satisfying one of the additional conditions $a_{1}=a_{2}$ or $a_{2}=a_{3}$. This also suggests that there are infinitely many elements of $\cal{H}$ with only three different entries. 

As we will see in the next section, these expectations are true.

\section{Proof of Theorem \ref{thm:H3} and
Theorem  \ref{thm:idenical base}. }

\subsection{Construction of parametric solution of 3-dimensional cubes}
Let $a_{0}, a_{1}, a_{2}, a_{3}\in\Z$ and suppose that
$$
H(a_{0}; a_{1}, a_{2}, a_{3})=a_{0}+\{0, a_{1}\}+\{0, a_{2}\}+\{0, a_{3}\}\subset \cal{S}.
$$
We focus on the system (\ref{Hsystem}) and solve it by solving the equations one by one.

The first three equations can be easily solved by taking
$$
a_{0}=p^2, \quad a_{1}=q^2-p^2,\quad a_{2}=r^2-p^2.
$$
The fourth equation is equivalent with the equation $q^2-p^2=s^2-r^2$, and thus
$$
p=\frac{1}{2} (t z-x y),\quad q=\frac{1}{2} (t z+x y),\quad r=\frac{1}{2} (y z-t x),\quad s=\frac{1}{2} (t x+y z).
$$
We solve the fifth equation with respect to $a_{3}$ and get
$$
a_{3}=P^2-p^2=\frac{1}{4} \left(4 P^2-t^2 z^2+2 t x y z-x^2 y^2\right).
$$
The sixth equation takes the form
$$
P^2 + t x y z = Q^2
$$
and solving it with respect to $t$ we get $t=\frac{Q^2-P^2}{x y z}$. Substituting the computed values of $p, q, r, s, t$,
we get the corresponding values of $a_0$, $a_1$, $a_2$, $a_3$ as follows:
\begin{equation}\label{a0a1a2a3}
\begin{aligned}
	a_0 & = (P^2-Q^2+x^2y^2)^2/(4 x^2 y^2), \\
	a_1 & = Q^2-P^2, \\
	a_2 & = (x^2-z^2)(P^2-Q^2+x y^2 z)(P^2-Q^2-x y^2 z)/(4 x^2 y^2 z^2), \\
	a_3 & = -(P+Q-x y)(P+Q+x y)(P-Q+x y)(P-Q-x y)/(4 x^2y^2).
\end{aligned}
\end{equation}
Replacing 
$(R,S)$ by $(R,S)/(2 x y z)$, the remaining two equations take the form
\begin{equation}\label{sysRS}
\begin{aligned}
	R^2 & = x^2(P^2-Q^2)^2 + x^2y^4z^4 - ((P^2-Q^2)^2-x^2 y^2(2P-x y)(2P+x y
)) z^2, \\
	S^2 & = x^2(P^2-Q^2)^2 + x^2y^4z^4 - ((P^2-Q^2)^2-x^2 y^2(2Q-x y)(2Q+x y
)) z^2.
\end{aligned}
\end{equation}
Set $(P,Q) = (\frac{1}{2}(u y-m), \frac{1}{2}(u y+m))$ to give
\begin{equation}\label{RSeq}
\begin{aligned}
R^2 & = ((x^2-z^2)u^2 + x^2z^2)m^2 - 2u x^2z^2 m y + x^2z^2(u^2-x^2+z^2)y^2, \\ 
	S^2 & = ((x^2-z^2)u^2 + x^2z^2)m^2 + 2u x^2z^2 m y + x^2z^2(u^2-x^2+z^2)
y^2.
\end{aligned}
\end{equation}
This intersection in projective $m,y,R,S$-space is elliptic because of the point
\[ (m,y,R,S) = (x z, \; u, \; x z(x z-u^2), \; x z(x z+u^2)), \]
and a cubic model is 
\begin{align*}
	E: \;Y^2 =  X(X^2 + & 2( (x^2-z^2)u^4 - (x^4-4x^2z^2+z^4)u^2 -x^2z^2(x^2
-z^2)) X \\
	& + (u^2-x^2)^2 (x^2-z^2)^2 (u^2+z^2)^2).
\end{align*}
The curve $E$ has at least two independent points of infinite order,
\begin{align*}
	Q_1 & = ((x^2-u^2)^2z^2, \; (u^2-x^2)^2(u^2x^2+z^4)z), \\
	Q_2 & = ((x^2-z^2)^2u^2, \; (x^2-z^2)^2(u^4+x^2z^2)u).
\end{align*}
We can now pull back points $i_1 Q_1+i_2 Q_2$ to give parametrizations for $(a_0
,a_1,a_2,a_3)$ in terms of $u,x,z$. For example, $Q_2$ pulls back to
\[ (a_0,a_1,a_2,a_3) = (u^2(x+z)^2, \; -4u^2x z, \; 0, \; (u^2-x^2)(u^2-z^2)); \]
and $Q_1$ pulls back to
\begin{align*}
	&	(a_0,a_1,a_2,a_3) = ( \\
	& 	(2 x^2 z^2 u^6 + x^2(x^3-x^2z-2x z^2-6z^3) u^5 -2 z(x^5-x^4z-3x^
3z^2-5x^2z^3+z^5) u^4 \\
	& + 2 z^2(x+z)(x^4-2x^3z-2x^2z^2-2x z^3+z^4) u^3 - 2z^3(x^5-5x^3z^2-3x^2
z^3 \\
& -x z^4+z^5) u^2 - z^6(6x^3+2x^2z+x z^2-z^3) u + 2x^3z^7)^2, \\
	&  -4u x z(u^2 x^2 - 2u x^2 z + 2x^2 z^2 - z^4)(u^2 x^2 - 2u^2 z^2 + 2u 
z^3 - z^4) \times \\
	& (-u^2 x^2 + 2u^3 z - 2u^2 z^2 + 2u z^3 - z^4)(u^3 x^2 - 2u^2 x^2 z + 2
u x^2 z^2 - 2x^2 z^3 + u z^4), \\
	&  4(u - z)z(x^2 - z^2)(-x^2 + u z)(u^2 x^2 - z^4) (u^2 - u z + z^2) \times \\
	& (u^3 x - 2u^2 x z + u x^2 z - u^2 z^2 + 2u x z^2 - x z^3)(u^3 x - 2u^2
 x z - u x^2 z + u^2 z^2 + 2u x z^2 - x z^3), \\
	&  (u^2 - x^2)(u - z)(u^2 x^2 - 2u x^2 z + 2u z^3 - z^4)(u^2 x^2 - 2u^2 
x z + 2u x z^2 - 2x z^3 + z^4) \times \\
	& (u^2 x^2 + 2u^2 x z - 2u x z^2 + 2x z^3 + z^4)(u^3 x^2 - u^2 x^2 z + 2
u x^2 z^2 - 2u^2 z^3 + u z^4 - z^5)).
\end{align*}
The curve $E$ has a torsion point 
\[ Q_0 = ( -(u^2-x^2)(x^2-z^2)(u^2+z^2), \; 2u x z(x^2-z^2)(u^2-x^2)(u^2+z^2)) \]
of order 4, but the pullbacks of $i_0 Q_0+i_1 Q_1+i_2 Q_2$ give symmetries of the pullbacks with $i_0=0$, and so it is not necessary to consider the point $Q_0$.\\ \\
\noindent
We note that the curve (\ref{RSeq}) becomes obviously singular for $u^2-x^2+z^2=
0$, when it reduces to the genus 0 curve 
\begin{equation}\label{RSeq2}
\begin{aligned}
    R^2 & = m \, ((u^4 + x^2z^2)m - 2u x^2z^2 y), \\
	S^2 & = m \, ((u^4 + x^2z^2)m + 2u x^2z^2 y).
\end{aligned}
\end{equation}
Setting $(u,x,z)=(c^2-d^2,c^2+d^2,2c d)$, then
\begin{align*}
R^2 & = m( (c^8+14c^4d^4+d^8)m - 8c^2d^2(c^2-d^2)(c^2+d^2)^2y),\\
S^2 & = m( (c^8+14c^4d^4+d^8)m + 8c^2d^2(c^2-d^2)(c^2+d^2)^2y).
\end{align*}
From
\begin{align*}
& ( (c^8+14c^4d^4+d^8)m - 8c^2d^2(c^2-d^2)(c^2+d^2)^2y) \; \times \\
& ( (c^8+14c^4d^4+d^8)m + 8c^2d^2(c^2-d^2)(c^2+d^2)^2y)=\square
\end{align*}
there follows the parametrization 
\[ (m,y) = ( \frac{-A^2-B^2}{c^8+14c^4d^4+d^8}, \; \frac{A B}{4c^2d^2(c^2-d^2)
(c^2+d^2)^2} ), \]
with 
\[ (c^8+14c^4d^4+d^8) R^2 = (A+B)^2 (A^2+B^2), \quad (c^8+14c^4d^4+d^8)S^2 = (A-
B)^2 (A^2+B^2). \]
The latter are parameterized by
\begin{align*}
	(A,B,R,S) = ( -(c^4-d^4) G^2 - 8c^2d^2 G H + (c^4-d^4) H^2, & \\
    4 c^2d^2 G^2 - 2(c^4-d^4) G H - 4c^2d^2 H^2, & \\
	-(G^2 + H^2)((c^4-4c^2d^2-d^4)G^2 + 2(c^4+4c^2d^2-d^4) G H  - (c^4-4c^2d
^2-d^4) H^2), & \\
	(G^2 + H^2)((c^4+4c^2d^2-d^4)G^2 - 2(c^4-4c^2d^2-d^4) G H - (c^4+4c^2d^2
-d^4) H^2) & ).
\end{align*}
Then
\begin{align*}
	& (P,Q) = (\frac{u y-m}{2}, \; \frac{u y+m}{2}) = 1/(4c^2d^2(c^2+d^2)^2)
 \; \times \\
	& ( (4c^2d^4(c^2+d^2)G^4 + (c^8-18c^4d^4+d^8) G^3 H + 8c^2d^2(2c^2-d^2)(
c^2+d^2) G^2 H^2 \\
	& - (c^8-18 c^4d^4+d^8) G H^3 + 4c^2d^4(c^2+d^2) H^4), \\
	& (-4c^4d^2(c^2+d^2)G^4 + (c^8-18c^4d^4+d^8) G^3 H + 8c^2d^2(c^2-2d^2)(c
^2+d^2) G^2 H^2 \\
	& -(c^8-18c^4d^4+d^8) G H^3 -4 c^4d^2(c^2+d^2) H^4)).
\end{align*}
This gives
\[ t=(Q^2-P^2)/(x y z) = -(c^2-d^2)(G^2+H^2)^2/(2c d(c^2+d^2)) \]
from which $(p,q,r,s)=\left( \frac{t z-x y}{2}, \frac{t z+x y}{2}, \frac{y z-t x
}{2}, \frac{y z+t x}{2} \right)$
may be computed, leading to \\
\begin{equation}\label{a1a2a3a4solution}
\begin{aligned}
	& a_0 \; : \; a_1 \; : \; a_2 \; : \; a_3 = p^2 \; : \; q^2-p^2 \; : \; 
r^2-p^2 \; : \; P^2-p^2 = \\
	& \\
& (c^2+d^2)^2 (-4c^2d^4(c^2-d^2) G^4 + (c^8-18c^4d^4+d^8) G^3 H \\
& + 8c^2d^2(c^2-d^2)(2c^2+d^2) G^2 H^2 - (c^8-18c^4d^4+d^8) G H^3 - 4c^2d^4(c^2-d^2) H^4)^2: \\
	& \\
& 8c^2d^2(c^4-d^4)^2 (G^2+H^2)^2 (2c^2d^2 G^2 - (c^4-d^4) G H - 2c^2d^2 H^2)((c^
4-d^4) G^2 \\
& + 8c^2d^2 G H - (c^4-d^4)H^2): \\
	& \\
& (c^2-d^2)^2(d(c+d) G - c(c-d) H)((d(c-d) G + c(c+d) H) \times \\
	& ((c^2-2c d-d^2) G + (c^2+d^2) H) ((c^2+2c d-d^2) G + (c^2+d^2) H) \\
	& ((c^2+d^2) G - (c^2+2c d-d^2) H) (c(c+d) G - d(c-d) H) \times\\
	& (c(c-d) G + d(c+d) H)((c^2+d^2) G - (c^2-2 c d-d^2) H)): \\
	& \\
& 4c^2d^2G H(G^2-H^2)((c^4-d^4) G + 4c^2d^2 H)(4c^2d^2 G - (c^4-d^4) H) \times \\
	& ((c^4-4c^2d^2-d^4) G + (c^4+4c^2d^2-d^4) H) ((c^4+4c^2d^2-d^4) G - (c^
4-4c^2d^2-d^4) H).
\end{aligned}
\end{equation}
Note that $a_0$, $a_1$, $a_2$, $a_3$ treated as polynomials in $\Z[c,d,G,H]$
are homogeneous with respect to $c,d$ of degree $20$,  while they are homogeneous with respect to $G,H$ of degree $8$. We are in the position to present the lower bound for $H_{3}(N)$.

\begin{proof}[Proof of Theorem \ref{thm:H3}] Here is the explanation how to get $\gg N^{1/8}$ reduced Hilbert cubes in $[0, N]$. We take $(c,d)=(3,1)$ in (\ref{a1a2a3a4solution}) and replace $(G,H)$ by $(t,1)$ in the parametrization (\ref{a1a2a3a4solution}). Next, applying the map $\phi_{1}$ and then the map $\phi_{\sigma}$ which switches $a_{1}$ with $a_{3}$, we get the Hilbert cube $H(a_{0}; a_{1}, a_{2}, a_{3})$, with
\begin{align*}
a_{0}&=25 \left(18 t^4-319 t^3-684 t^2+319 t+18\right)^2 \\
a_{1}&=9 (t-1) t (t+1) (9 t-20) (11 t+29) (20 t+9) (29 t-11)\\
a_{2}&= 16 (t+5)(t+6)(2t-3)(3t+2)(5t-7)(5t-1) (6 t-1) (7 t+5) \\
a_{3}&=7200(t^2+1)^2(9 t^2-40 t-9)(10 t^2+9 t-10).  \\
\end{align*}
For $t\geq 7$ all entries are positive and satisfy $a_{1}<a_{2}<a_{3}$. Next, in order to have a reduced cube we note that the polynomials $a_{i}(t), i=0, 1, 2, 3$ are co-prime as polynomials in $\Z[t]$. This means that there exists an integer $M$ such that a possible common factor is bounded independently of $t$. To be more precise, it is enough to take 
$$
M=2^{45}\cdot 3^{18}\cdot 5^{2}\cdot 13^{16}\cdot 37^{16}.
$$
The shape of this number follows from the extended Euclidean algorithm applied to the polynomials $a_{0},a_{1}$. 

Because the number of possible common divisors is finite, the number of reduced Hilbert cubes grows like $N^{1/8}$ with $N$ going to infinity.
\end{proof}

\subsection{Hilbert cubes with two identical base elements}
\bigskip

From Table 1 we see that there are also Hilbert cubes $H(a_{0}; a_{1}, a_{2}, a_{3})$ satisfying the condition $a_{1}=a_{2}$. We prove that there are infinitely many such examples.

\begin{proof}[Proof of Theorem \ref{thm:idenical base}]
The system guaranteeing that $H(a_{0}; a_{1}, a_{2}, a_{3})\in\cal{S}$ with $a_{1}=a_{2}$ takes the form
\begin{center}
\begin{tabular}{rrr}
        $a_0=p^2$, & $a_0+a_1=q^2$, & $a_0+2a_1=r^2$, \\
        $a_0+a_3=s^2$, & $a_0+a_1+a_3=t^2$, & $a_0+2a_1+a_3=u^2$,
\end{tabular}
\end{center}
so that $(a_0,a_1,a_3) = (p^2, q^2-p^2, s^2-p^2)$ with
\[ p^2-2q^2+r^2 = 0, \; p^2-q^2-s^2+t^2 = 0, \; 2p^2-2q^2-s^2+u^2 = 0. \]
Set
\[ (p,q,s,t) = (a d-b c, \; a c-b d, \; a d+b c, \; a c+b d) \]
to give
\begin{align*}
(2c^2-d^2) a^2 - 2c d a b - (c^2-2d^2) b^2 & = r^2, \\
(2c^2-d^2) a^2 + 2c d a b - (c^2-2d^2) b^2 & = u^2
\end{align*}
a curve of genus 1 over the function field $\Q(c,d)$,
and in fact elliptic since there is a point at $(a,b,r,u)=(1,1,c-d,c+d)$.
A cubic model is
\[ Y^2 = X ( X^2 - 4(c^2-c d-d^2)(c^2+c d-d^2) X + 4(c^2-d^2)^4), \]
with points
\[ P_0(X,Y)=((c^2-d^2)^2, \; (c^2-d^2)^2(c^2+d^2)), \quad P_1(X,Y)=(2(c^2-d^2)^2, \; 4c d(c^2-d^2)^2) \]
of infinite order and order 4, respectively.
The pullback of multiples of $P_0$ give solutions $(a_0,a_1,a_2,a_3)$ for which $a_1=a_2$. For example, the point $2P_0$ pulls back to
\begin{align*}
        (a,b,r,u) = ( & (5c^2-d^2)(c^2+7d^2), \; -(c^2-5d^2)(7c^2+d^2), \\
        & (c-d)(c^4+36c^3d+38c^2d^2+36c d^3+d^4), \\
        & (c+d)(c^4-36c^3d+38c^2d^2-36c d^3+d^4))
\end{align*}
with
\begin{equation}\label{a1a2solution}
\begin{aligned}
        a_0 & = (c-d)^2(7c^4+12c^3d-22c^2d^2+12c d^3+7d^4)^2, \\
        a_1 & = -24(c-d)^2(c+d)^2(c^2+d^2)(c^2-6c d+d^2)(c^2+6c d+d^2), \\
        a_3 & = -4c d(c^2-5d^2)(5c^2-d^2)(7c^2+d^2)(c^2+7d^2)
\end{aligned}
\end{equation}

\end{proof}
\begin{rem}
{\rm We can also find reduced Hilbert cubes $H(a'_{0};a'_{1},a'_{2},a'_{3})$ where $a'_2=a'_3$. 
Indeed, it is enough at (\ref{a1a2solution}) to find conditions for $c,d$ such that $a_{i}(c,d)>0$ and $a_{2}(c,d)>a_{3}(c,d)$. Then $H(a_{0};a_{3},a_{1},a_{2})$ is the Hilbert cube we are looking for. Because the polynomials $a_{i}(c,d)$ are homogeneous we can assume that $d=1$ and look for solutions of the corresponding inequalities for rational $c$.
One can easily check that if 
$$
c \in\left(\frac{1}{4} \left(3+\sqrt{17}\right), \sqrt{5} \right)=:I,
$$
then $a_{i}(c,1)>0$ and $a_{2}(c,1)>a_{3}(c,1)$. Because there are infinitely many rational numbers in $I$, we get infinitely many Hilbert cubes satisfying the required condition. 
}    
\end{rem}
\bigskip

\section{Proof of corollary {\ref{cor-fractions}} }

We apply our parametric families of Hilbert cubes contained in the set of squares 
to prove the corollary. We restate it for convenience:

Let $\cal{H}_{+}=\cal{H}\cap \N_{+}^{4}$. For each $i, j\in\{0, 1, 2, 3\}$ with $i<j$, the set
$$
H_{i,j}:=\left\{\frac{a_{i}}{a_{j}}:\;(a_{0}, a_{1}, a_{2}, a_{3})\in\cal{H}_{+}\right\}\subset \Q_{+}
$$
is dense in the Euclidean topology in the set $\R_{+}$.

\begin{proof}
The proof in each case is similar, and we prove the result for the cases $i < j$.
We first show $\overline{H_{0,1}}=\R_{+}$. Define the function
\[ f_{0,1}(x) = \frac{a_0(x,1)}{a_1(x,1)} = \frac{ -(7+12x-22x^2+12x^3+7x^4)^2} {24(1 + x)^2(1 + x^2)(1-6x+x^2)(1+6x+x^2)} \]
where $a_0$, $a_1$ are the polynomials from (\ref{a1a2solution}). Note that $f_{0,1}(c/d) = a_0(c,d)/a_1(c,d)$. \\
The functions $a_i(x,1)$ for $i=1,2,3$, are all positive on the intervals
\[ x \in (-3-2\sqrt{2}, \; -\sqrt{5}) \; \cup \; \left(-\frac{1}{\sqrt{5}}, \; -\frac{3+\sqrt{17}}{4}\right) \; \cup \; \left(\frac{1}{\sqrt{5}}, \; \sqrt{5}\right), \]
so that $a_0(x,1)$ being square implies $f_{0,1}(x)$ is non-negative on these ranges.
The function $f_{0,1}(x)$ has a local minimum value of $0$ on the interval $(-3-2\sqrt{2}, \; -\sqrt{5})$, occurring at $x_{min} = \frac{(-3-2\sqrt[4]{2})}{2\sqrt{2}-1} \sim -2.94155$; define $I_{0,1} = (-3-2\sqrt{2}, \; x_{min})$.
We have
\[ \lim_{x \rightarrow (-3-2\sqrt{2})^{+}} f_{0,1}(x) = +\infty, \]
and $f_{0,1}(x)$ is continuous and decreasing on $(-3-2\sqrt{2}, \; x_{min})$.
It follows that
\[ \{ f_{0,1}(x) : x \in \Q_{+} \cap I_{0,1} \} \subset H_{0,1}. \]
The density of $\Q_{+} \cap I_{0,1}$ in $I_{0,1}$ implies that the Euclidean closure of the set $H_{0,1}$ is dense in $\R_{+}$.
Because we are in the situation $a_1=a_2$ we also get that $H_{0,2}$ is dense in $\R_{+}$. \\ \\
For the density of $H_{0,3}$, we follow the same approach and define
\[ f_{0,3}(x) = \frac{a_0(x,1)}{a_3(x,1)} = \frac{-(x-1)^2 (7+12x-22x^2+12x^3+7x^4)^2}{4x(x^2-5)(5x^2-1)(x^2+7)(7x^2+1)}. \]

Let $I_{0,3}$ be the interval $(1, \sqrt{5})$, on which $a_i(x,1)$, $i=0,...,3$, are all positive. Then
\[ \{ f_{0,3}(x): x \in \Q_{+} \cap I_{0,3} \} \subset H_{0,3}. \]
Because $f_{0,3}$ is continuous and increasing on $I_{0,3}$, and $f_{0,3}(1)=0$, and
\[ \lim_{x \rightarrow \sqrt{5}^{-}} f_{0,3}(x) = +\infty, \]
it follows that $H_{0,3}$ is dense in $\R_{+}$. \\ \\
To prove density of $H_{1,3}$, consider the function
\[ f_{1,3}(x) = \frac{a_1(x,1)}{a_3(x,1)} = \frac{ (6(1-x)^2(1+x)^2(1+x^2)(1-6x+x^2)(1+6x+x^2))}{ x(5-x^2)(1-5x^2)(7+x^2)(1+7x^2)}. \]
Let $I_{1,3} = (1, \sqrt{5})$, on which $a_i(x,1)$, $i=0,...,3$, are all positive. Then
\[ \{ f_{1,3}(x): x \in \Q_{+} \cap I_{1,3} \} \subset H_{1,3}. \]
Because $f_{1,3}$ is continuous and increasing on $I_{1,3}$, and $f_{1,3}(1)=0$, and
\[ \lim_{x \rightarrow \sqrt{5}^{-}} f_{1,3}(x) = +\infty, \]
it follows that $H_{1,3}$ is dense in $\R_{+}$. By applying the same function, we deduce also that $H_{2,3}$ is dense in $\R_{+}$. \\ \\
Finally, to get the density of $H_{1,2}$ we cannot of course use the polynomials (\ref{a1a2solution}), where $a_1=a_2$. Instead, we need to go back to the parametric solution given by (\ref{a1a2a3a4solution}) and consider for example the function
\begin{align*}
  f_{1,2}(x) & = \frac{a_1(x,1,2,1)}{a_2(x,1,2,1)} \\
  & = \frac{400x^2(1+x^2)^2(-1-3x^2+x^4)(-3+16x^2+3x^4)}{(9+2x^2+x^4)(4-13x^2+x^4)(1-22x^2+9x^4)(1+3x^2+4x^4)}.
\end{align*}
Let $I_{1,2}=\left( \frac{2+\sqrt{7}}{3}, \sqrt{ \frac{3+\sqrt{13}}{2}} \right)$, on which interval $a_i(x,1,2,1)$, $i=0,1,2,3$, are all positive. Then
\[ \{ f_{1,2}(x): x \in \Q_{+} \cap I_{1,2} \} \subset H_{1,2}. \]
Since $f_{1,2}$ is continuous and decreasing on $I_{1,2}$, and
$f_{1,2}\left( \sqrt{ \frac{3+\sqrt{13}}{2}} \right)=0$, and
\[ \lim_{x \rightarrow \left( \frac{2+\sqrt{7}}{3} \right)^{+}} f(1,2)(u) = +\infty, \]
it follows that $H_{1,2}$ is dense in $\R_{+}$.

\end{proof}

\section{Cubes with the same values of $a_0$, $a_1$, $a_2$.}
We have observed numerically many instances of Hilbert cube pairs $(a_0,a_1,a_2,a_3)$, $(A_0,A_1,A_2,A_3)$, in which $a_i=A_i$, $i=0,1,2$. More precisely, in the range $a_{0}<a_{1}<10^8$ there are exactly 6 examples of this kind presented in the table below.
\begin{table}[htbp]
\centering
\begin{tabular}{|l|l|l|l|l|}
\hline
$a_{0}=A_{0}$ & $a_{1}=A_{1}$ & $a_{2}=A_{2}$ & $a_{3}$ & $A_{3}$ \\
\hline
332929  & 6726720  & 6726720  & 8322435   & 22381827 \\
438244  & 1004157  & 1939520  & 3013920   & 8791200  \\
643204  & 1367520  & 1367520  & 6804237   & 35947197 \\
4674244 & 1367520  & 1367520  & 2773197   & 31916157 \\
4713241 & 71831760 & 71831760 & 130613448 & 665527080 \\
38775529 & 71831760 & 71831760 & 96551160  & 631464792 \\
\hline
\end{tabular}
\caption{ Hilbert cubes $H(a_0; a_1, a_2, a_3), H(A_0; A_1, A_2, A_3)\subset \cal{S}$ with $a_{i}=A_{i}$ for $i=0, 1, 2$.}
\end{table}
 Based on this (modest) set of examples one can speculate that there should be infinitely many such examples. As our next result shows our expectation is true.
\begin{thm}
    There are infinitely many pairs $H(a_{0}; a_{1}, a_{2}, a_{3}), H(A_0; A_1, A_2, A_3)\subset \cal{S}$ of Hilbert subes such that $a_{i}=A_{i}$ for $i=0, 1, 2$.
\end{thm}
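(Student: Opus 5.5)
We need infinitely many triples $(a_0,a_1,a_2)$ for which there are two distinct values $a_3\neq A_3$ completing a Hilbert cube in $\cal S$. Fix a $2$-dimensional cube $H(a_0;a_1,a_2)\subset\cal S$, writing $a_0=p^2$, $a_0+a_1=q^2$, $a_0+a_2=r^2$, $a_0+a_1+a_2=s^2$. An admissible $a_3$ is the same as a rational point on the curve
\[
C_{a_0,a_1,a_2}:\quad a_0+a_3=\square,\; a_0+a_1+a_3=\square,\; a_0+a_2+a_3=\square,\; a_0+a_1+a_2+a_3=\square .
\]
Writing $a_3=P^2-a_0$, this becomes the genus-one intersection
\[
P^2+a_1=\square,\quad P^2+a_2=\square,\quad P^2+a_1+a_2=\square,
\]
namely the curve $y^2=(P^2+a_1)(P^2+a_2)(P^2+a_1+a_2)$ of genus $5$. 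The free variable is $P$, and the conditions come in three pairs. By the Lemma above there are only finitely many points for a fixed cube, so the task is to force two of them.

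I would work inside the parametric family of Section 4 with $a_1=a_2$, as the table suggests; four of the six examples there have $a_1=a_2$. For $a_1=a_2$ the conditions are $P^2+a_1=\square$ and $P^2+2a_1=\square$, where $a_1=q^2-p^2$ and $p^2,q^2,r^2$ is a $3$-term arithmetic progression of squares with common difference $a_1$. So the requirement is: \emph{find a congruent-number-type $a_1$ (the common difference of a $3$-AP of squares) together with two further values $P_1\ne P_2$, each different from $p$, such that $P_i^2,\,P_i^2+a_1,\,P_i^2+2a_1$ are squares.} In other words, $a_1$ must be the common difference of three distinct $3$-APs of squares, $(p^2,q^2,r^2)$, $(P_1^2,\dots)$ and $(P_2^2,\dots)$.

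The key steps are as follows.

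\begin{enumerate}
\item A $3$-AP of squares with difference $n$ corresponds to a rational point on the congruent number curve $E_n: Y^2=X^3-n^2X$ via $X=P^2+n$ (suitably normalised). Distinct APs correspond to distinct points modulo the obvious symmetries.
\item Since $E_n$ has positive rank whenever one non-torsion point exists, multiples of one point give infinitely many rational APs with common difference $n$.
\item Take the triple $p$, $P_1=x(2T)$-derived and $P_2=x(3T)$-derived, all coming from multiples of a single generic point $T$. This gives rational $a_0=p^2$, $a_1=n$, $a_3=P_1^2-a_0$, $A_3=P_2^2-a_0$.
\item Clear denominators by scaling with $\psi_m$, which preserves everything. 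Varying $n$ over infinitely many congruent numbers, or varying $T$ for a fixed $n$, gives infinitely many pairs.
\end{enumerate}

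The scaling normalisation must be common to both cubes. This works because $\psi_m$ is applied simultaneously to the shared $(a_0,a_1,a_2)$.

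The main obstacle is positivity and distinctness, together with non-degeneracy: $a_3$ and $A_3$ must be nonzero and different from each other. They could even be negative if $P_i<p$, but the problem allows $a_i\in\Z$, and $\phi$-maps can reorder. A second concern is whether ``infinitely many'' should mean infinitely many reduced pairs rather than pairs that are just scalings of one another. Varying $T$ over the infinitely many points of $E_n$ produces distinct ratios $a_3/a_0$, hence pairs that are not rescaled copies of each other. I would check this by computing the heights of $P_1,P_2$ for a concrete $n$, e.g. $n=840$ from the $1+\{0,528\}+\{0,840\}+\{0,840\}$ example, and confirming the construction reproduces rows of Table~5.
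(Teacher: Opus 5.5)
Your proposal is correct in substance and reaches the same reduction as the paper, but it produces the key objects differently. In the paper, the substitution $\alpha_i=(\gamma_i+\delta_i)/2$, $\beta_i=(\gamma_i-\delta_i)/2$ forces $q=r$, i.e.\ $a_1=a_2$. It then collapses the system to $\gamma_0\delta_0(\gamma_0^2-\delta_0^2)=\gamma_1\delta_1(\gamma_1^2-\delta_1^2)=\gamma_2\delta_2(\gamma_2^2-\delta_2^2)$, three Pythagorean triangles of equal area. That is exactly your three $3$-term progressions of squares with common difference $a_1$. The paper then inserts a classical explicit family of three equal-area triangles and reads off polynomial formulas in $(u,v)$, with the common difference varying. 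You instead fix one congruent number (e.g.\ $n=24$, from $1,25,49$) and generate the progressions from the group $E_n(\mathbb{Q})$. Your route is more conceptual and gives more: taking $T,2T,\dots,kT$ yields arbitrarily many cubes sharing $(a_0,a_1,a_2)$. But it relies on positive rank and the $2$-descent criterion, and it is inexplicit. For fixed $n$ the heights grow so fast that you get only polylogarithmically many examples in $[0,N]$. The paper's route is elementary, can be checked by direct expansion, and gives a homogeneous polynomial family.

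Two points need tightening. First, Step~1 must be stated precisely. A point of $E_n(\mathbb{Q})$ gives a progression of squares $X-n,X,X+n$ exactly when it lies in $2E_n(\mathbb{Q})$, which is a subgroup. So $T$ must be taken in $2E_n(\mathbb{Q})$, e.g.\ the point attached to the progression starting at $p^2$; otherwise use $2T,4T,6T$. For a generic $T$ one has $3T\notin 2E_n(\mathbb{Q})$, and $3T$ gives no progression. With that fixed, the remaining points go through:
\begin{itemize}
\item Distinctness is automatic. $X(kT)=X(lT)$ forces $(k\mp l)T=O$, which is impossible: $T$ has $Y\neq 0$ and all torsion of $E_n(\mathbb{Q})$ is $2$-torsion, so $T$ has infinite order.
\item Positivity follows by letting the point with the smallest $X$-coordinate define $a_0$.
\item Non-proportionality holds because $a_1/a_0=n/(X(T)-n)$ takes infinitely many values.
\end{itemize}
Second, your genus remarks are wrong. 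The three-square curve has genus $5$, and $y^2=$ sextic has genus $2$. They are not used, though. The real reason the method works is that $a_1=a_2$ collapses the genus-$5$ curve to the genus-one curve $P^2+n=\square$, $P^2+2n=\square$.
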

\begin{proof} 
The defining system is
\begin{center}
\begin{tabular}{rrrr}
	$a_0=p^2$ & $a_0+a_1=q^2$ & $a_0+a_2=r^2$ & $a_0+a_1+a_2+a_3=s^2$ \\
	$a_0+a_3=P_1^2$ & $a_0+a_1+a_3=Q_1^2$ & $a_0+a_2+a_3=R_1^2$ & $a_0+a_1+a_2+a_3=S_1^2$ \\
	$a_0+A_3=P_2^2$ & $a_0+a_1+A_3=Q_2^2$ & $a_0+a_2+A_3=R_2^2$ & $a_0+a_1+a_2+A_3=S_2^2$.
\end{tabular}
\end{center}
On eliminating $a_0,a_1,a_2,a_3,A_3$, this is equivalent to the system
\[ p^2+s^2=q^2+r^2, \qquad P_1^2+S_1^2=Q_1^2+R_1^2, \qquad P_2^2+S_2^2=Q_2^2+R_2^2, \]
\[ p^2-q^2 = P_1^2-Q_1^2 = P_2^2-Q_2^2, \qquad p^2-r^2 = P_1^2-R_1^2 = P_2^2-R_2^2. \]
To satisfy the first row, set
\begin{align*}
	(p,q,r,s) = & (\alpha_0 \delta_0-\beta_0 \gamma_0, \;\; \alpha_0 \delta_0+\beta_0 \gamma_0, \;\; \alpha_0 \gamma_0-\beta_0 \delta_0, \;\; \alpha_0 \gamma_0+\beta_0 \delta_0), \\
	(P_1,Q_1,R_1,S_1) = & (\alpha_1 \delta_1-\beta_1 \gamma_1, \;\; \alpha_1
 \delta_1+\beta_1 \gamma_1, \;\; \alpha_1 \gamma_1-\beta_1 \delta_1, \;\; \alpha_1 \gamma_1+\beta_1 \delta_1), \\
	(P_2,Q_2,R_2,S_2) = & (\alpha_2 \delta_2-\beta_2 \gamma_2, \;\; \alpha_2
 \delta_2+\beta_2 \gamma_2, \;\; \alpha_2 \gamma_2-\beta_2 \delta_2, \;\; \alpha_2 \gamma_2+\beta_2 \delta_2).
\end{align*}
The second row then delivers
\[ \alpha_0 \beta_0 \gamma_0 \delta_0 = \alpha_1 \beta_1 \gamma_1 \delta_1 = \alpha_2 \beta_2 \gamma_2 \delta_2, \]
\[ (\alpha_0^2-\beta_0^2)(\gamma_0^2-\delta_0^2) = (\alpha_1^2-\beta_1^2)(\gamma
_1^2-\delta_1^2) = (\alpha_2^2-\beta_2^2)(\gamma_2^2-\delta_2^2). \] 
If we set
\[ (\alpha_i,\beta_i)=\left( \frac{\gamma_i+\delta_i}{2},\;\frac{\gamma_i-\delta_i}{2} \right), \quad i=0,1,2, \]
then we obtain the simple system
\[ \gamma_0 \delta_0(\gamma_0^2-\delta_0^2) = \gamma_1 \delta_1(\gamma_1^2-\delta_1^2) = \gamma_2 \delta_2(\gamma_2^2-\delta_2^2), \]
requiring three Pythagorean triangles with equal area. Such are provided by
\begin{align*}
	(\gamma_0,\delta_0) = & (u^2+u v+v^2, u^2-v^2), \\
	(\gamma_1,\delta_1) = & (u^2+u v+v^2, 2u v+v^2), \\
	(\gamma_2,\delta_2) = & (u^2+2 u v, u^2+u v+v^2),
\end{align*}
with 
\begin{align*}
	(\alpha_0,\beta_0) = & (u(2u+v)/2, v(u+2v)/2), \\
	(\alpha_1,\beta_1) = & ((u+v)(u+2v)/2, u(u-v)/2) \\
	(\alpha_2,\beta_2) = & ((u+v)(2u+v)/2, (u-v)v/2).
\end{align*}
Then
\begin{align*}
	(p,q,r,s) = & ((2u^4 - 5u^2v^2 - 4uv^3 - 2v^4)/2, \\
	& (2u^4 + 2u^3v + u^2v^2 + 2uv^3 + 2v^4)/2, \\
	& (2u^4 + 2u^3v + u^2v^2 + 2uv^3 + 2v^4)/2, \\
	& (2u^4 + 4u^3v + 5u^2v^2 - 2v^4)/2);
\end{align*}
\begin{align*}
	(P_1,Q_1,R_1,S_1) = & ((-u^4 + 2u^3v + 7u^2v^2 + 8uv^3 + 2v^4)/2, \\
	& (u^4 + 2u^3v + 7u^2v^2 + 6uv^3 + 2v^4)/2,  \\
	& (u^4 + 2u^3v + 7u^2v^2 + 6uv^3 + 2v^4)/2, \\
	& (u^4 + 6u^3v + 5u^2v^2 + 4uv^3 + 2v^4)/2)
\end{align*}
\begin{align*}
	(P_2,Q_2,R_2,S_2) = & ((2u^4 + 4u^3v + 5u^2v^2 + 6uv^3 + v^4)/2, \\
	& (2u^4 + 6u^3v + 7u^2v^2 + 2uv^3 + v^4)/2, \\
	& (2u^4 + 6u^3v + 7u^2v^2 + 2uv^3 + v^4)/2, \\
	& (2u^4 + 8u^3v + 7u^2v^2 + 2uv^3 - v^4)/2).
\end{align*}
Finally, scaling by a factor 4, 
\begin{align*}
	(a_0,a_1,a_2,a_3) = &  ( (2u^4 - 5u^2v^2 - 4uv^3 - 2v^4)^2, \\
	& 4u(u - v)v(u + v)(2u + v)(u + 2v)(u^2 + uv + v^2), \\
	& 4u(u - v)v(u + v)(2u + v)(u + 2v)(u^2 + uv + v^2), \\
	& -u(u + 2v)(u^2 - 2uv - 2v^2)(u^2 + 2v^2)(3u^2 + 4uv + 2v^2) );
\end{align*}
\begin{align*}
	(A_0,A_1,A_2,A_3) = & ((2u^4 - 5u^2v^2 - 4uv^3 - 2v^4)^2, \\
	& 4u(u - v)v(u + v)(2u + v)(u + 2v)(u^2 + uv + v^2), \\
	& 4u(u - v)v(u + v)(2u + v)(u + 2v)(u^2 + uv + v^2), \\
	& v(2u + v)(2u^2 + 2uv - v^2)(2u^2 + v^2)(2u^2 + 4uv + 3v^2)),
\end{align*}
where $a_0=A_0$, $a_1=A_1$, $a_2=A_2$.
\end{proof}

\section{Conjectures, remarks, computational observations}\label{sec4}

In this section we formulate some conjectures which may stimulate further research. We already answered on the second and fourth part of Question \ref{genques}. We used our computational approach to study the first part of Question \ref{genques}. More precisely, we are interested whether for given $n\in\N$ there is a reduced Hilbert cube $H(a_{0}; a_{1}, a_{2}, a_{3})$ such that $a_{0}=n^2$. Note that the assumption that $H(a_{0}; a_{1}, a_{2}, a_{3})$ need to be reduced is important and makes the question nontrivial. Indeed, without this assumption we get the Hilbert cube $H(n^2, 528n^2, 840n^2, 840n^2)\subset \cal{S}$. 

Let us note that if $a_{0}=n^2$ is fixed then, necessarily $a_{1}=t^2-n^2$ and using small modification of Algorithm 1 we hunted for reduced Hilbert cubes of dimension three with $a_{0}=n^2$ and $a_{1}\leq (1.4\cdot 10^8)^2$ for $n\in\{1, \ldots, 100\}$. These computations took several days on the personal computerof the third author. In this range we found the appropriate solution for all but four numbers given by $n=42, 44, 56,  80$. For some values of $n$ the examples are quite large. We collect our findings in the table below. 
\begin{landscape}
\begin{table}[htbp]
\centering
\begin{tabular}{|l|l|l|l||l|l|l|l|}
\hline
$a_{0}$ & $a_{1}$ & $a_{2}$ & $a_{3}$ &
$a_{0}$ & $a_{1}$ & $a_{2}$ & $a_{3}$ \\
\hline
$1^2$  & 528 & 840 & 840 & $26^2$ & 151826342525 & 3489610801824 & 66625590001824 \\
$2^2$  & 3360 & 3360 & 9405 & $27^2$ & 8334040 & 15704640 & 33280632 \\
$3^2$  & 13704795 & 35772352 & 57562560 & $28^2$ & 44782080 & 97041417 & 730728240 \\
$4^2$  & 565488 & 3094065 & 4309760 & $29^2$ & 112728 & 159960 & 1584240 \\
$5^2$  & 32736 & 46200 & 54264 & $30^2$ & 21903703101 & 151484423200 & 356015088000 \\
$6^2$  & 7215822880 & 10763232480 & 26819630253 & $31^2$ & 10920 & 10920 & 26928 \\
$7^2$  & 5280 & 5280 & 9555 & $32^2$ & 647548785 & 5626199040 & 6457085712 \\
$8^2$  & 836559012432 & 77545495104000 & 1563557678865 & $33^2$ & 76396812236640 & 145116788951160 & 579009729388552 \\
$9^2$  & 1951960680 & 16612374240 & 39358195240 & $34^2$ & 3885 & 73920 & 73920 \\
$10^2$ & 2400 & 4389 & 8736 & $35^2$ & 121275 & 315744 & 1154400 \\
$11^2$ & 4844280 & 7134120 & 23746008 & $36^2$ & 300986505 & 660694320 & 776624128 \\
$12^2$ & 257920 & 266112 & 3932145 & $37^2$ & 110590166232 & 193219146120 & 738057963240 \\
$13^2$ & 10440 & 15960 & 62832 & $38^2$ & 804196431456 & 7816246018581 & 1829965027200 \\
$14^2$ & 292485 & 487008 & 2328480 & $39^2$ & 160888 & 1318680 & 8172360 \\
$15^2$ & 157326624 & 14657944675 & 183629376 & $40^2$ & 32367246681 & 161813106000 & 566840338944 \\
$16^2$ & 175305 & 802560 & 929040 & $41^2$ & 68986596728 & 91584309960 & 92354600520 \\
$17^2$ & 45936 & 134400 & 249711 & $42^2$ &   &   &   \\
$18^2$ & 294525 & 655776 & 2855776 & $43^2$ & 839040 & 4362072 & 4462920 \\
$19^2$ & 11898227880 & 11898227880 & 15944365080 & $44^2$ &   &   &   \\
$20^2$ & 639600 & 2246601 & 3489024 & $45^2$ & 538200 & 1065064 & 2952936 \\
$21^2$ & 17412649408 & 246163829760 & 1357746969735 & $46^2$ & 3360 & 3360 & 7293 \\
$22^2$ & 66617760 & 688642080 & 178836645 & $47^2$ & 13167 & 349440 & 526320 \\
$23^2$ & 41496 & 138600 & 138600 & $48^2$ & 13503321 & 58489600 & 98960400 \\
$24^2$ & 166512640 & 184524480 & 6527508273 & $49^2$ & 531960 & 1785168 & 3796200 \\
$25^2$ & 2429856 & 3377619 & 10594400 & $50^2$ & 39525 & 384384 & 1329216 \\
\hline
\end{tabular}
\caption{Hilbert cubes $H(a_{0}; a_{1}, a_{2}, a_{3})\subset\cal{S}$ with $a_{0}=n^2, n\in\{1,\ldots, 50\}$.}
\end{table}
\end{landscape}

\begin{landscape}
\begin{table}[htbp]
\centering
\begin{tabular}{|l|l|l|l||l|l|l|l|}
\hline
$a_{0}$ & $a_{1}$ & $a_{2}$ & $a_{3}$ &
$a_{0}$ & $a_{1}$ & $a_{2}$ & $a_{3}$ \\
\hline
$51^2$ & 166320 & 166320 & 2120248 & $76^2$ & 549114862234224 & 4122693075650625 & 20196553848570624 \\
$52^2$ & 11300325105 & 31353073920 & 33667843440 & $77^2$ & 12195120 & 74419200 & 887319015 \\
$53^2$ & 21216 & 28875 & 153216 & $78^2$ & 748672960 & 1037735712 & 10501529445 \\
$54^2$ & 9790308000 & 44245016109 & 68817025984 & $79^2$ & 286440 & 286440 & 901968 \\
$55^2$ & 15057741075 & 70139692896 & 92139564000 & $80^2$ &   &   &   \\
$56^2$ &   &   &   & $81^2$ & 6758640 & 28851823 & 76573440 \\
$57^2$ & 708142072 & 7438541760 & 12494094480 & $82^2$ & 31051605 & 182189280 & 241422720 \\
$58^2$ & 44160 & 94605 & 110880 & $83^2$ & 207480 & 337680 & 1416360 \\
$59^2$ & 25080 & 65688 & 93240 & $84^2$ & 1028093040 & 4312147833 & 12967736320 \\
$60^2$ & 255996400 & 1462599936 & 1795637025 & $85^2$ & 421800 & 4232256 & 7905744 \\
$61^2$ & 21983000 & 3335636304 & 3802075200 & $86^2$ & 27387360 & 30319653 & 54073920 \\
$62^2$ & 6765 & 43680 & 43680 & $87^2$ & 27142672471200 & 51905252513331 & 65136117775456 \\
$63^2$ & 7108920 & 16052080 & 33070032 & $88^2$ & 1414053072 & 5534161920 & 6435721985 \\
$64^2$ & 62985 & 270480 & 346368 & $89^2$ & 1899240 & 1899240 & 2299440 \\
$65^2$ & 9488336 & 19505664 & 44618175 & $90^2$ & 8029125 & 21763456 & 126824544 \\
$66^2$ & 3659040 & 4165408 & 4752405 & $91^2$ & 16728000 & 18559200 & 40568619 \\
$67^2$ & 47040 & 86112 & 126555 & $92^2$ & 110851377 & 149026800 & 637655040 \\
$68^2$ & 54933120 & 60259545 & 82514432 & $93^2$ & 200200 & 407376 & 488376 \\
$69^2$ & 51408 & 196840 & 344520 & $94^2$ & 200928 & 1551165 & 3109920 \\
$70^2$ & 36309 & 79200 & 114816 & $95^2$ & 7125883200 & 8042493375 & 213369153536 \\
$71^2$ & 1459311360 & 1730222175 & 3062842608 & $96^2$ & 32875482640 & 121549143105 & 209822618880 \\
$72^2$ & 10320319737 & 14224850640 & 178225815040 & $97^2$ & 1294755 & 2015520 & 12728352 \\
$73^2$ & 29640 & 224112 & 304920 & $98^2$ & 118560 & 908160 & 1456917 \\
$74^2$ & 405405 & 6723360 & 20082848 & $99^2$ & 45674280 & 7131286008 & 929691280 \\
$75^2$ & 1307691 & 4209184 & 7365600 & $100^2$ & 309225 & 354816 & 836400 \\
\hline
\end{tabular}
\caption{Hilbert cubes $H(a_{0}; a_{1}, a_{2}, a_{3})$ with $a_{0}=n^2, n\in\{51,\ldots, 100\}$.}
\end{table}
\end{landscape}

\begin{conj}
For each $n\in\N_{+}$, there is a Hilbert cube $H(a_{0}; a_{1}, a_{2}, a_{3})\subset\cal{S}$ of dimension 3 with $a_{0}=n^2$.     
\end{conj}

\bigskip
In Theorem \ref{thm:H3} we proved that $H_{3}(N)\gg N^{1/8}$. Here is the picture of the behavior of $H_{3}(n)/n^{1/8}$ and $H_{3}(n)/n$ for $n\leq 10^6$. From these pictures one can conjecture that the following equalities holds:
$$
\lim_{n\rightarrow +\infty}\frac{H_{3}(n)}{n^{1/8}}=+\infty,\quad \quad\lim_{n\rightarrow +\infty}\frac{H_{3}(n)}{n}=0.
$$
\begin{figure}[!ht]
    \centering
    \includegraphics[width=1\linewidth]{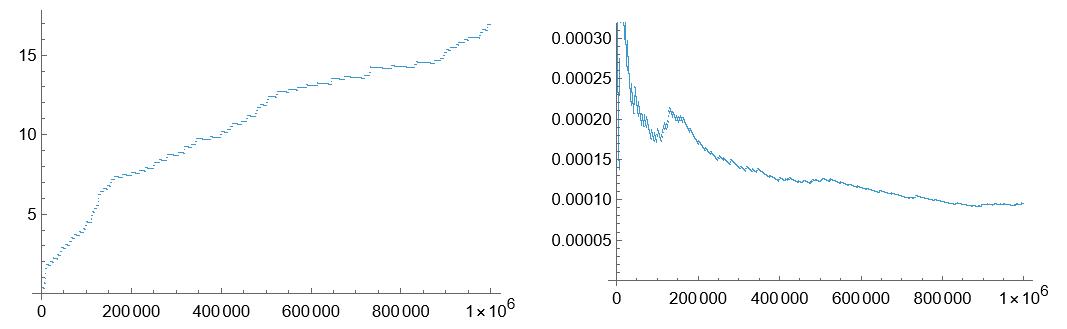}
    \caption{Plot of the functions $H_{3}(n)/n^{1/8}$ (left) and $H_{3}(n)/n$ (right) for $n\leq 10^6$.}
    \label{fig:Pic4}
\end{figure}

Moreover, these figures suggest the following.

\begin{ques}
What is the true order of magnitude of the function $H_{3}(N)$?
\end{ques}

To get an idea of the expected behaviour, we used the {\bf FindFit} procedure in Mathematica 14.2 \cite{Wol}. The command
\begin{center}
{\bf FindFit}[data, expression, parameters, variable]
\end{center}
determines the numerical values of the parameters that make the given expression provide the best fit to the data as a function of the specified variable.
In our case, the data set consists of the values of $H_{3}(N)$ for $N \in {1, \ldots, 10^6}$, the expression is $F(x)=a x^{b}$, the parameters are $a$ and $b$, and the variable is $x$.
For this setup, the procedure finds that the best-fit parameters are $a = 0.01798344440392967$ and $b = 0.6220626625113858$. The plots of $H_{3}(n)$ and $F(n)$ and the absolute value of the difference are given below. In the considered range the function $F(n)$ approximate $H_{3}(n)$ quite well because 
$$
\max\{|H_{3}(n)-F(n)|:\;n\leq 10^7\}=15.474
$$
This experimental approach suggest that it is reasonable to expect that $H_{3}(N)\gg N^{b}$ for some $b>0.6$ and $N\gg 1$.

\begin{figure}[!ht]
    \centering
    \includegraphics[width=0.9\linewidth]{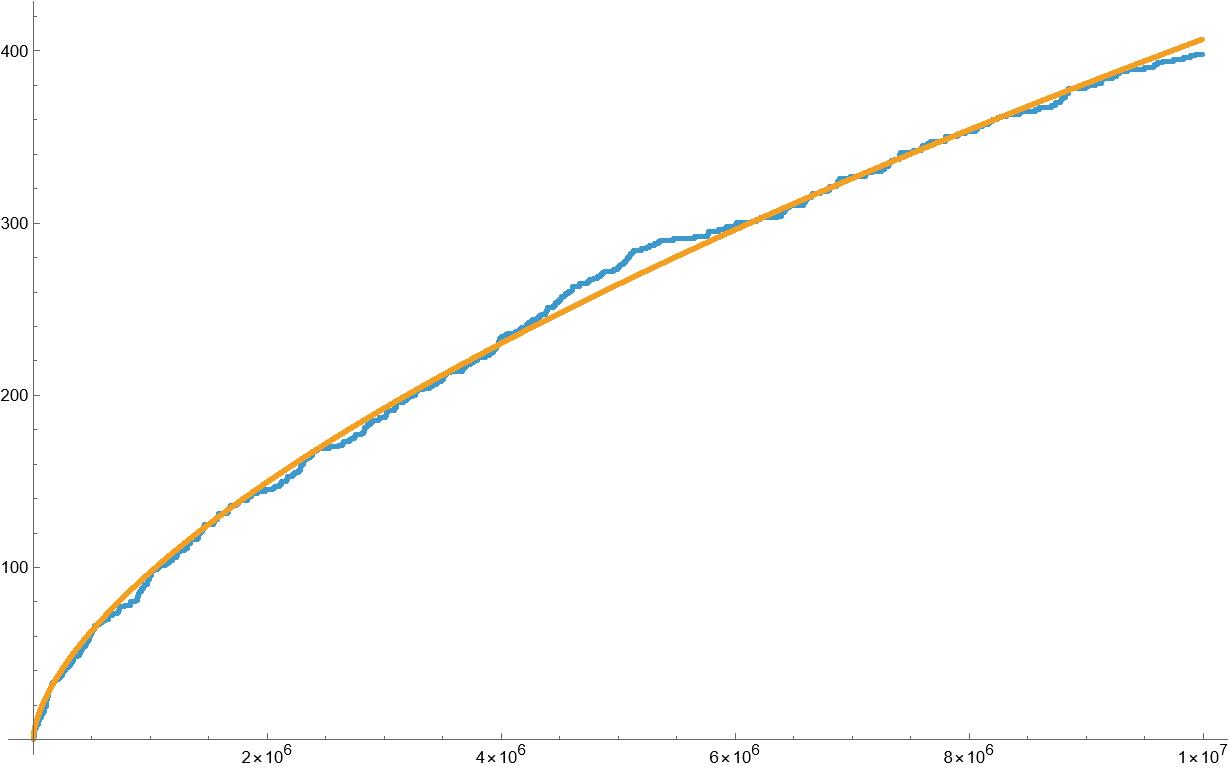}
    \caption{The plot of the functions $H_{3}(n)$ (blue) and the fitted function $F(n)=0.01798n^{0.62206}$ (yellow) for $n\leq 10^7$.}
    \label{fig:Pic4}
\end{figure}

\begin{figure}[!ht]
    \centering
    \includegraphics[width=0.9\linewidth]{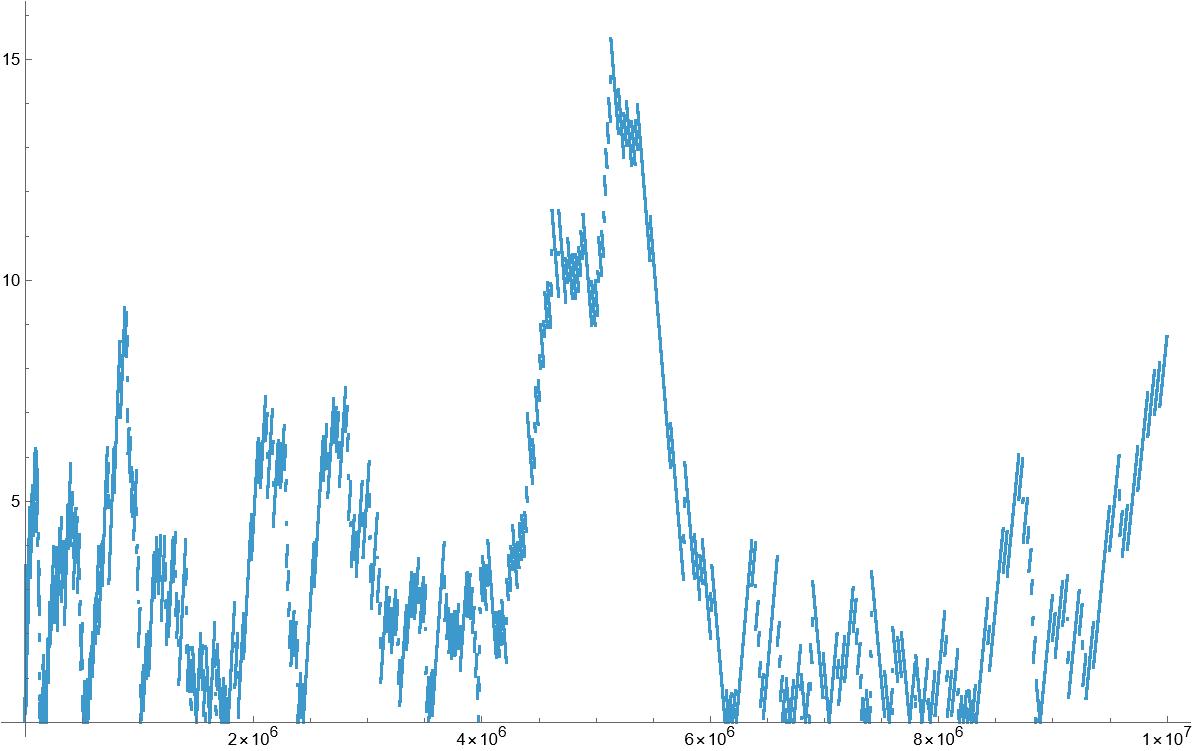}
    \caption{The plot of $|H_{3}(n)-F(n)|$, where $F(n)$ is the fitted function $F(n)=0.01798n^{0.62206}$ for $n\leq 10^7$.}
    \label{fig:Pic4}
\end{figure}
\bigskip

Having infinitely many Hilbert cubes of dimension 3 in squares, it is natural to return to the question whether there exists a reduced Hilbert cube of dimension 4 in the set of squares (the first part in Question \ref{firstques})? Although we tried hard, we were unable to do so. However, using the parametrization given by (\ref{a1a2solution}), we were able to find the following:
\begin{align*}
    a_0 = & (c-1)^4(7c^4 + 12c^3 - 22c^2 + 12c + 7)^2, \\
a_1 = & a_2 = -24(c-1)^4(c+1)^2(c^2+1)(c^2 - 6c + 1)(c^2 + 6c + 1), & \\
a_3 = & -4c(c-1)^2(c^2-5)(c^2+7)(5c^2-1)(7c^2+1), \\
a_4 = & -\frac{1}{4}((c^3+c^2+19c-5)(3c^3-c^2+9c+5)(5c^3-19c^2-c-1)(5c^3+9c^2-c+3)
\end{align*}
where thirteen of the sixteen sums are squares (the exceptions being $a_0+a_4$, $a_0+a_1+a_2+a_4$, $a_0+a_1+a_2+a_3+a_4$).

Numerically, we can do a little better. Suppose given a Hilbert cube
$\{a_0;a_1,a_1,a_3\}$. Its extension to a $4-$cube demands finding $a_4=X$ satisfying that all the six quantities
\[ a_0+X, \; a_0+a_1+X, \; a_0+2a_1+X, \; a_0+a_3+X, \; a_0+a_1+a_3+X, \; a_0+2a_1+a_3+X, \]
be square. Consider (for example) the elliptic curve
\[ Y^2 = (a_0+X)(a_0+a_1+X)(a_0+2a_1+X), \]
with Mordell-Weil group $\mathcal{G}$. The group will have rank at least 1 since $X=0$ gives a point of infinite order. The points $(X,Y)$ with $a_0+X$, $a_0+a_1+X$, $a_0+2a_1+X$ all square are precisely the points of $2\, \mathcal{G}$. This guarantees finding $X$ with three of the above six quantities square. Let $X$ range over $X$-coordinates of $2\mathcal{G}$, and check to see whether any of
\[ a_0+a_3+X, \; a_0+a_1+a_3+X, \; a_0+2a_1+a_3+X  \]
are square. This was implemented for the parametrized $3$-cubes at (\ref{a1a2solution}) above, for small values of $c,d$. Unfortunately only a couple of examples with one extra square were found. So at present we know only ``pseudo-"$4$-cubes where fourteen of the sixteen sums are squares, e.g.:
\[ 6310^2, \; 105386400, \; 105386400, \; 144545984, \; -121397859. \]

\bigskip

Another question which comes to mind is what kind of result can be proved in the case of other polynomials of degree 2. More precisely, let $f\in\Q[x]$ be an polynomial of degree 2. Without loss of generality we can assume that for some rational numbers $a, b, a>0$, we have $f(x)=ax^2+bx$. Then, one can ask what is the biggest dimension of a Hilbert cube sitting in the set 
$$
S(a,b):=\{f(n):\;n\in\N\}\cap\N_{+}.
$$
Let us note that if $f(x)=x(x+1)/2$, i.e., $a=b=1/2$, then there are infinitely many Hilbert cubes of dimension 3 in $S(a,b)$. More precisely, for each $n\in\N$ we have $H(a_{0}(n); a_{1}(n), a_{2}(n), a_{3}(n))\subset S(1, 0)$, where
$$
a_{0}(n)=\frac{n(n+1)}{2}, \quad a_{1}(n)=66(2n+1)^2,\quad a_{2}(n)=a_{3}(n)=105(2n+1)^2.
$$
In consequence, the number of Hilbert cubes sitting in the set $S(1,0)\cap [0,N]$ for large $N$ is $\gg N^{1/2}$.

\begin{prob}
Find values $a, b$ such that the set $S(a,b)$ contains Hilbert cube of dimension 4.
\end{prob}








\end{document}